\DeclareMathOperator*{\trace}{Tr}
\DeclareMathOperator*{\argmin}{arg\,min}
\title{Sparse matrix factorizations for fast linear solvers with application to Laplacian systems\thanks{We  thank Raf Vandebril and Francois Glineur for fruitful discussions and comments.
This paper presents research results of the Belgian Network DYSCO (Dynamical Systems, Control, and Optimisation), funded by the Interuniversity Attraction Poles Programme, initiated by the Belgian State, Science Policy Office, and the ARC (Action de Recherche Concertée) on Mining and Optimization of Big Data Models funded by the Wallonia-Brussels Federation. 
    }}
\author{Michael T. Schaub~\thanks{ICTEAM, Universit\'e catholique de Louvain \& naXys, Universit\'e de Namur, Belgium (\email{mschaub@mit.edu}). Equally contributing first author; present address: Institute for Data, Systems, and Society, Massachusetts Institute of Technology.} 
\and Maguy Trefois~\thanks{ICTEAM, Universit\' e catholique de Louvain, Belgium (\email{maguy.trefois@uclouvain.be}). Equally contributing first author}
\and Paul van Dooren~\thanks{ICTEAM, Universit\'e catholique de Louvain, Belgium (\email{paul.vandooren@uclouvain.be}).}
\and Jean-Charles Delvenne~\thanks{ICTEAM and CORE, Universit\'e catholique de Louvain, Belgium (\email{jean-charles.delvenne@uclouvain.be}).}
}
\begin{document}

\maketitle

\begin{abstract} 
In solving a linear system with iterative methods, one is usually confronted with the dilemma of having to choose between cheap, inefficient iterates over sparse search directions (e.g., coordinate descent), or expensive iterates in well-chosen search directions (e.g., conjugate gradients). 
In this paper, we propose to interpolate between these two extremes, and show how to perform cheap iterations along non-sparse search directions, provided that these directions can be extracted from a new kind of sparse factorization.  
For example, if the search directions are the columns of a hierarchical matrix,
then the cost of each iteration is typically logarithmic in the number of variables.  
Using some graph-theoretical results on low-stretch spanning trees, we deduce as a special case a nearly-linear time algorithm to approximate the minimal norm solution of a linear system $Bx= b$ where $B$ is the incidence matrix of a graph. 
We thereby can connect our results to recently proposed nearly-linear time solvers for Laplacian systems, which emerge here as a particular application of our sparse matrix factorization.
\end{abstract}

\begin{keyword}
    matrix factorization, linear system, Laplacian matrix, iterative algorithms, sparsity, hierarchical matrices
\end{keyword}

\begin{AMS}
15A06, 15A23, 15A24
\end{AMS}

\section{Introduction}

Finding solutions of large linear systems of equations is a fundamental issue, underpinning most areas of mathematical sciences and quantitative research.
For instance, consider partial differential equations arising in various areas of physics, mechanics and electro-magnetics.
These have commonly to be solved numerically, and a spatial discretization of such a problem naturally leads to solving a large sparse or structured linear system~\cite{Young2014}.

In principle, two strategies to solve linear systems exist.
First, there are \textit{direct methods} \cite{Davis2006} like Cholesky factorization or Gaussian elimination. 
Those methods provide a (numerically) exact solution of the system by performing a finite number of computations. 
However, these algorithms can be computationally expensive, in particular as the full set of computations has always to be performed to obtain a problem solution, even if a coarser approximation thereof would be sufficient.

A second strategy is to use \textit{iterative methods} \cite{Elman1994,Saad2003,Young2014}, such as the Jacobi method or gradient descent.
Unlike for direct methods, the result after every step of an iterative algorithm may be interpreted as an approximate solution to the problem, which keeps getting improved until a desired stopping criterion, e.g., a predefined  precision, is reached.
As in practice the specification of the system to be solved is hardly ever exact, this ability to stop at suitable approximate solutions renders iterative methods generally less costly in terms of running time.
For instance, the complexity of direct Gaussian elimination for a system of size $n$ is $\mathcal{O}(n^3)$.
In contrast, the iterative Jacobi method takes only $\mathcal{O}(Nn^2)$ time.
Here, $N$ is the number of iterations needed, which can usually be kept small.

However, when the system size $n$ is very large, effectively all classical direct and iterative methods become computationally prohibitive, unless the matrix is known to have a special structure (banded, Toeplitz, semiseparable, etc.). 
Methods which provide faster means for solving linear systems are thus highly demanded. 

\subsection{Background and Related work}
The success of any iterative update scheme in solving a linear system depends on two intertwined factors.
On the one hand, we would like to design our iterations such that each update brings us as close as possible to the true solution.
On the other hand, we would like to make each iteration computationally as cheap as possible.

Let us initially consider the first of these two objectives here.
Trivially, the update that would bring us closest to the true solution entails finding the correct solution directly, and thus requires only one iteration. 
However, this is clearly not feasible, if our initial problem evaded direct solution methods.
A more realistic scheme, aiming to bring us as close as possible to the desired solution would be conjugate gradient descent, which tries to find good search directions at each step using gradient information.
The downside of an approach like gradient descent is that each step can be computationally very costly, e.g., as in general all coordinates have to be updated at each step.

This bring us back to the second objective mentioned above: making each iteration as computationally cheap as possible.
On this end of the methodological spectrum there are approaches like (canonical) coordinate descent.
Here the idea is to keep the updates very sparse and only update one (or a small number of $k$) coordinates at a time, thereby facilitating cheap iterations.
However, as this imposes quite strong restrictions on the allowed search directions, this results in general in a large number of iterations needed, possibly outweighing the gain in computational complexity for each iteration.

Recently, Spielman and Teng~\cite{Spielman2004} provided a seminal contribution and showed that one can construct iterative algorithms to solve symmetric, diagonally dominant (SDD) systems in nearly-linear running time. 
Here, \textit{nearly-linear} refers to a complexity of the form $\mathcal{O}(\ell\log^c\ell \log(\varepsilon^{-1}))$, where $\ell$ is the number of nonzero entries in the system matrix, $c$ is an arbitrary positive constant, and $\varepsilon$ is a desired accuracy to be reached.
These results have been further improved and simplified in the last decade \cite{Cohen2014,Kelner2013,Koutis2010,Koutis2011,Koutis2012,Lee2013}, and there is now a substantial literature on solving SDD systems effectively in nearly-linear time. 
Interestingly, all these algorithms follow essentially the same paradigm. 
The problem is first reduced to solving a system of the form $Lx = b$, where $L$ is the Laplacian matrix of an undirected graph. 
The Laplacian system is then solved efficiently using graph theoretic techniques.

\subsection{Main contributions}
We provide a sparse matrix factorization that enables the construction of fast iterative algorithms.
Namely, using our $k$-sparse matrix factorization allows for cheap iterative updates in efficient directions.

The key question we address is in how far cheap, coordinate descent like updates can also be performed in more flexible search directions. 
As we show in the following the answer is indeed affirmative.
If the iterative updates are performed along directions $q_i$ that can be assembled into a $k$-sparse decomposable matrix $Q=[q_1,\ldots,q_n]$, then we can always perform cheap iterative updates, despite the fact that the search direction may not have sparse support, i.e., $Q$ might be a dense matrix.
This significantly enlarges the array of possible search directions and paves the way for efficient algorithms that can benefit from both cheap updates and well-chosen search directions.

Remarkably our $k$-sparse factorization is applicable for a variety of matrices with seemingly disparate structures.
In particular, we can design iterative algorithms for sparse, hierarchical, semiseparable, or Laplacian matrices, with a complexity similar to specially tailored algorithm for those respective classes. 
In the case of Laplacian systems (and therefore all SDD systems through the usual reduction), our approach differs from previous work in that we take a different, matrix-theoretic approach, rather than relying purely on graph-theoretic machinery to achieve a nearly-linear complexity.
Finally, we show that this algorithm can be applied to solve Laplacian systems in nearly linear time, thereby establishing a connection to the previous literature.
Rather than emphasizing one particular application and providing detailed simulations for our algorithms, the focus of the present paper is on the theoretical development of a new sparse matrix factorization and its algebraic properties, which may then be used in different contexts.

Note that both sparse and dense systems are in principle amenable for a $k$-sparse decomposition.
Therefore, in principle, the target systems for our $k$-sparse matrix factorization and the associated iterative solution strategy may be dense or sparse.
For instance, Laplacian systems, which serve as our final application example in this paper, are typically sparse systems.
Nevertheless, the theory developed is equally applicable to dense systems as will become apparent when discussing hierarchical matrices.
Of course, in the case of very large dense systems, one may have to find efficient representations or approximations for storing such data (e.g., using hierarchical matrices~\cite{Hackbusch1999,Hackbusch2015}, or semiseparable matrices~\cite{Vandebril2007,Vandebril2008}).
This is a challenge in its own right, not addressed in the present manuscript.

\subsection{Outline of the paper}
In Section 2, we first review some preliminaries for iteratively solving linear systems and set up some notation
In Section 3, we then motivate and define our $k$-sparse matrix factorization.
We highlight some properties of this factorization and show how it enables an iteration of the form (\ref{eq0}) to be computed in $\mathcal{O}(k)$ time.
We then discuss, how these cheap iterations can be utilized to construct fast iterative solvers for linear systems.
In Section 4, we review several examples of $k$-sparsely factorizable matrices, including some sparse matrices, hierarchical matrices, semi-separable matrices, as well as the incidence matrices of trees.
Of particular interest here are hierarchical matrices \cite{Hackbusch1999,Boerm2003,Grasedyck2003}, which are an example of $k$-sparse factorizable matrices for which $k$ does not depend on the size of the matrix. 
In Section 5, we present fast iterative solvers for systems of hierarchical matrices, based on $k$-sparse decompositions.
In Section 6 we then show how similar techniques can be applied if the system matrix is the incidence matrix of a graph, and how this naturally leads to an algorithm for solving a Laplacian system in nearly-linear time.
Section 7 concludes the paper and discusses possible avenues for future work.
To improve readability, some technical proofs are reported in the appendix.

\section{Preliminaries}\label{sec:prelim}
For simplicity of notation we will consider only real vectors and matrices, although generalizations to the complex case are straightforward.
In the sequel, the index variable $t$ will be reserved to denote the $t$-th iterate of a vector ($x$, or $y$ respectively).
Otherwise, an indexed vector $v_i$ is to be interpreted as the $i$th column vector of a set of column vectors (usually associated with a corresponding matrix $V=[v_1,v_2,\ldots]$).

From an abstract point of view, we consider the problem of finding the minimal norm vector $x$ within an affine space $\mathcal X$.
Let $v \in \mathcal X$ be any point in our affine space.
Then by updating $x$ within this search space along a set $\{q_i\}$ of chosen search directions spanning $\mathcal X  - v$, one can find the minimal norm solution of $x$. 
More precisely, starting from an $x_0 \in \mathcal X$ we iteratively solve:
\begin{flalign}\label{eq:affine_search}
    \min \;&\|x\| \\
    \text{s.t. }& x- v \in \text{span}(\{q_i\}).\nonumber
\end{flalign}
As we review in next section, this problem is closely connected to iteratively solving a linear system, and the natural updates are of the form:
\begin{equation}\label{eq0}
x_{t+1} = x_t - \frac{x_t^Tq_i}{q_i^Tq_i}  q_i
\end{equation}

The goal of this work is to show that if the search directions for problem \eqref{eq:affine_search} are such that they correspond to the columns $q_i$ of a matrix $Q$ that is $k$-sparsely factorizable, then all iterative updates of the form \eqref{eq0} can be performed in $\mathcal{O}(k)$ time.
Here $k$ is usually much smaller than the dimension of the search space, thereby facilitating fast iterative updates schemes, as we will see in the subsequent sections.

\subsection{Underdetermined systems}

Given a compatible linear system $Ax = b$, we are looking for the optimal solution of the following optimization problem:
\begin{flalign}
    \min \;&\|x\| \\
    \text{s.t. }& Ax=b, \nonumber
\end{flalign}
where $\|x\| := \sqrt{x^Tx}$.
We denote this optimal solution by $x^*$: 
\begin{equation}
\label{eq1}
x^* := \arg\min_{s.t. Ax = b} \|x\|,
\end{equation} 

This problem can be readily solved as follows. 
Suppose we are given a matrix $Q$ where the columns $q_i$ form a basis of the null space, $\text{null}(A)$, of $A$. 
If $x_0$ is a feasible solution to $Ax = b$, we can write (\ref{eq1}) as  
\begin{equation}\label{eq:aff}
x^* := \arg\min_{s.t.\  x = x_0 + Q  y} \|x\|,
\end{equation}
for some unknown vector $y$.
Consequently, we may compute increasingly accurate approximations of $x^*$ by iteratively updating $x$ according to: 
\begin{equation}
    x_{t+1} = x_t + \alpha_t^* q_i \qquad \text{with} \qquad \alpha_t^*  =  \arg\min_{\alpha_t \in \mathbb{R}} \|x_t + \alpha_t q_i\| = - \frac{ x_t^T q_i}{q_i^T q_i}.
\end{equation}
Thus each iteration is of the form (\ref{eq0}).
We remark that these updates may be interpreted in the context of a (randomized) Kacmarz scheme as discussed in the Appendix.
If we start with a feasible solution $x_0$, each iterate $x_t$ is an exact solution of $Ax=b$, since all updates added to $x_0$ are in the null space of $A$.
Therefore, the above iterative method converges to the optimal $x^*$.

\subsection{Overdetermined and square systems}
\label{overdetcase} Iteration (\ref{eq0}) also appears naturally when iteratively solving an overdetermined system:
\begin{equation}
\argmin_{y} \|Ay-b\|. 
\end{equation} 
By simply making the substitution $x=Ay-b$, we can transform the above into the equivalent problem: 
\begin{flalign}\label{eq:overdetermined}
    \min \;&\|x\| \\
    \text{s.t. }& x+b \in \text{Im}(A),\nonumber
\end{flalign}
i.e., we are again trying to find the minimum norm solution of $x$ within an affine space.
Now an arbitrary $y_0$ will provide a starting point $x_0 = Ay_0-b$ for an iterative update procedure, and the search directions can be set to $Q=A$. 
Let $e_i$ denote the $i$-th unit coordinate vector.
It is now easy to see that our update rule \eqref{eq0} for $x$ amounts to dual updates in $y$ in coordinate descent form:
$$y_{t+1} = y_t - \dfrac{(Ay_t -b)^Tq_i}{\|q_i\|^2}e_i = y_t + \alpha_t^* e_i$$

Hence, we can iteratively construct the solutions in $y$ and $x$ by keeping track of the stepsizes $\alpha^*_t$ in the directions of $Q$. 
One may of course alternatively choose $Q=AS$, for any full-row-rank matrix $S$.
The case of a square invertible system corresponds to the overdetermined scenario in which the minimum-norm solution $x$ is zero.
Most of our results for the underdetermined case can thus be simply recast, \textit{mutatis mutandis}, to the overdetermined or square invertible setting, and vice versa.

\section{A new sparse matrix factorization for fast iterative updates}

\subsection{A $k$-sparse matrix factorization enabling efficient updates for iterative algorithms}
We are now prepared to introduce the notion of $k$-sparse matrix factorization. 
Our motivation for this factorization is that it should enable fast iterative updates of the form \eqref{eq0}, i.e., we want to compute \textit{any} iteration
$$x_{t+1} = x_t - \frac{x_t^Tq_i}{q_i^Tq_i}  q_i,$$
in $\mathcal{O}(k)$ time, if $q_i$ is a column of the $k$-sparsely factorizable matrix $Q= [q_1, q_2, \ldots]$.

The underlying idea here is akin to the case where $q_i$ is a sparse vector with only $k$ non-zero entries.
Then just $k$ non-zero products need to be computed. 
Hence, the computational cost of the update is $\mathcal{O}(k)$. 
However, in order to solve a generic linear system efficiently, we need to ensure that we can find a set of vectors $\{q_1,\ldots,q_n\}$ such that \textit{all} necessary iterative updates can be performed with this complexity.
This will be the key ingredient of our results on linear solvers presented in Section \ref{sec:iterative_solvers}.

\begin{definition}[Support and sparsity of vectors and matrices]
    The support of a vector ${v = \left(v^1, \ldots, v^m\right)^T \in \mathbb{R}^m}$ is the set of indices of the nonzero entries of $v$: 
    $$\mathrm{supp}(v) = \{i \in \{1,\ldots, m\} : v^i \neq 0\}.$$ 
    A vector $v \in \mathbb{R}^m$ is said to be $k$-sparse, if the size of its support, $|supp(v)|$, is less than or equal to $k$. Similarly, a matrix is said to be \textit{$k$-column} (\textit{$k$-row}) \textit{sparse} if each of its columns (rows) is $k$-sparse.
\end{definition}

Suppose that $x_t$ is not stored in the canonical basis, but in a different set of coordinates encoded by a matrix $C$. 
That is, instead of performing iterations \eqref{eq0} on $x_t$, we keep track of a vector $y_t$ such that $x_t=Cy_t$.
To yield a sparse update, we may choose $C$ such that $q_i$ is sparse in this representation, i.e., $q_i=C d_i$, where $d_i$ is a $l$-sparse vector.
This leads to an iteration of the form:
\begin{equation*}
    Cy_{t+1} = Cy_t - \dfrac{x_t^Tq_i}{q_i^Tq_i}Cd_i
\end{equation*}
Using this representation, every update would be sparse in that it would only effect $l$ components of $y$.
However, this is not enough to perform each iteration (\ref{eq0}) fast, as one also needs to compute the scalar product $x_t^Tq_i$, which in the new basis becomes $y_t^T C^TCd_i$, i.e., the iteration in terms of $y_t$ is of the form:
\begin{equation*}
y_{t+1} = y_t - \dfrac{y_t^TC^TCd_i}{d_i^TC^TCd_i}d_i
\end{equation*}
To bound the complexity of this operation, one must understand the sparsity pattern of $C^TC$, which is dictated by how the supports of the columns of $C$ overlap. 
Observe that the entry $[C^TC]_{ij}$ contains the scalar products between the $i^{th}$ and the $j^{th}$ column of $C$.
Whence, if every column of $C$ overlaps in support with at most $c$ other columns, then every column of $C^TC$ contains at most $c$ non-zero entries. 
If we can find a matrix for which this is true, then $C^TCd_i$ is a $k=cl$ sparse vector, since $d_i$ is $l$-sparse, and $y_t C^TCd_i$ is computed in time $\mathcal{O}(k)$. 
If we compile all such vectors $q_i$ into a matrix $Q$, then we say that $Q=CD$ is a $k$-sparse factorization.

While this reasoning provides us with some intuition, this definition must in fact be improved to reach tighter complexity bounds.
First, we can exploit the symmetry of $C^TC$, by noting that it can be decomposed as $C^TC= U^T+U$, where $U$ is an upper-triangular matrix.
Observe that the number of non-zero entries in the $i$th column of $U^T$ (or $i$th row of $U$) is bounded by the number of columns $c_j$ that overlap with $c_i$ for $j \geq i$. 
Second, two columns of $U^T$ may have their non-zero entries at the same positions. 
Therefore, the support of the sum of two columns does not necessarily increase.
To bound the complexity we need to look at the size of the union of supports of all columns $u_j$ of $U^T$, for which $j$ belongs to the support of $d_i$.
This number can indeed be much lower than the approximate estimate $cl$ above.
This justifies the following definition.

\begin{definition}
\label{defi22}
    Suppose a matrix $Q \in \mathbb{R}^{m \times n}$ has a factorization $Q=CD$. Let us denote the columns of $C\in \mathbb R^{m\times p}$ and $D \in \mathbb R^{p\times n}$ by $c_i$ and $d_j$, respectively. 
    We define the forward-overlap $FO(c_i)$ of a column $c_i$ to be the list of columns $c_j$, with $j \geq i$, that have a support overlapping with the support of $c_i$.
    We call the factorization $Q=CD$  \emph{$k$-sparse} if  $\left|\cup_{i \in \mathrm{supp}(d_j)} FO(c_i)\right| \leq k$ for all $j$ (see Figure \ref{fig2} for an illustration).
    Without loss of generality each column of $C$ and each row of $D$ is supposed to be nonzero.
\end{definition}

\begin{figure}[tb!]
\center
\includegraphics[width=\textwidth]{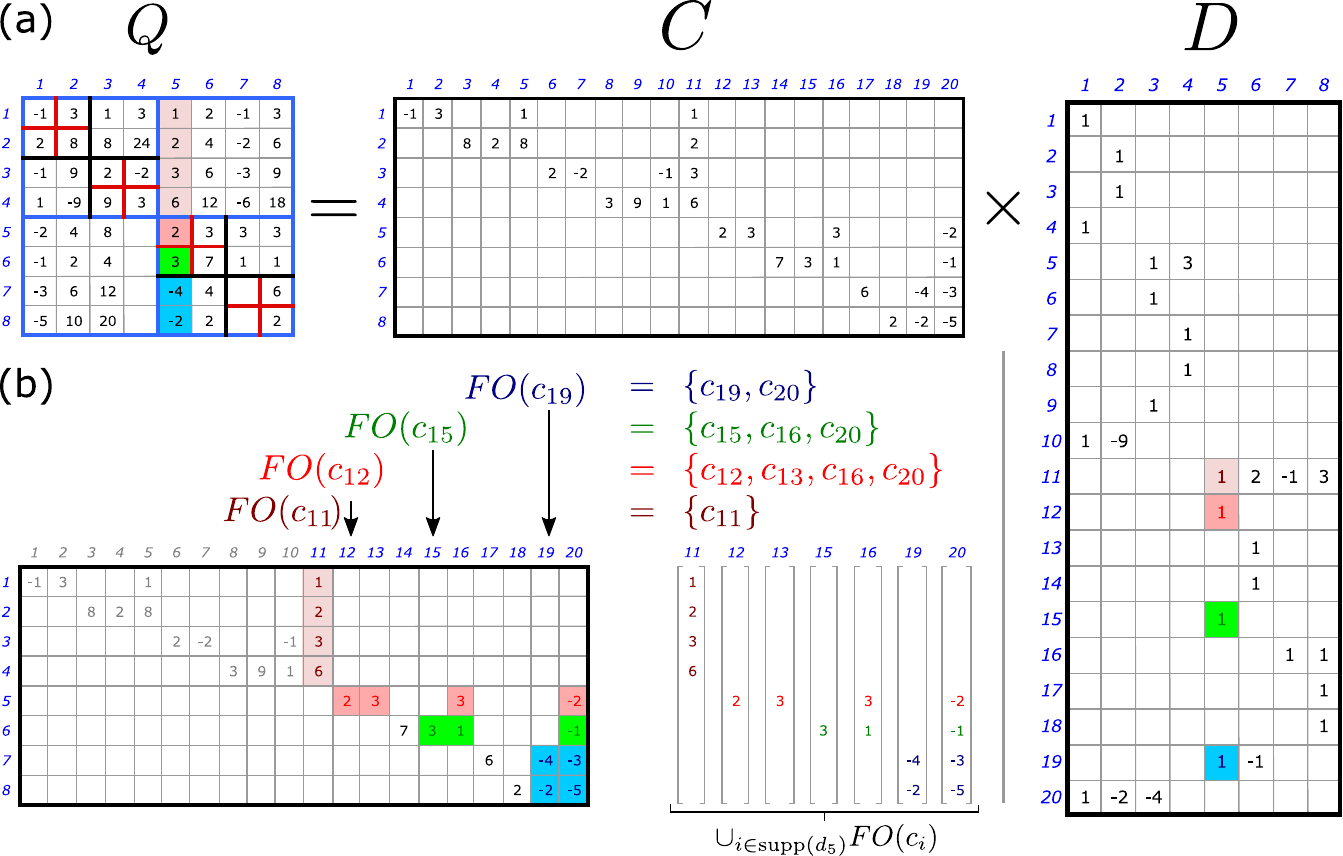}
\caption{(a) An example of $8$-sparse factorization. (b) Illustration of the forward overlap of all columns $i \in \text{supp}(d_5)$}
\label{fig2}
\end{figure}
The example in Figure \ref{fig2} shows an 8-sparse factorization of the given matrix $Q$. 
For instance, one can easily check that the forward overlap of column $c_{12}$ is $FO(c_{12}) = \{c_{12}, c_{13}, c_{16}, c_{20}\}$, and e.g.
$$\left|\cup_{i \in supp(d_5)}FO(c_i)\right| = \left|\{c_{11}, c_{12}, c_{13}, c_{15}, c_{16}, c_{19}, c_{20}\}\right| = 7\leq k = 8.$$

To gain some further intuition, let us consider an alternative definition of a sparse factorization. 
We define a partial order on the columns of $C$ with the following properties. 
First, only columns $c_i$ with overlapping support are comparable. 
Second, every subset $T_i= \{c_i,\ldots\}$ spanning a column $q_i$ has an upper set of at most $k$ elements.
The upper set is here defined as the union of $T_i$ and all columns of $C$ larger than any element of $T_i$ in the partial order.
Indeed the factorization $Q=CD$ expresses nothing but the fact that every column $q_i$ is a linear combination of a set of columns of $C$ with coefficients given by entries of $i$th column of $D$.

The following properties of a $k$-sparse factorization are worth noting.
\begin{enumerate}
\item Any $m$-by-$n$ matrix $Q$ is $\min(m,n)$-sparsely factorizable with either $Q=QI$ or $Q=IQ$. 
    Similarly, it is easy to see from an SVD that every rank $k$ matrix is $k$-sparse factorizable.
\item \label{proper2}  If $Q = CD$ is a $k$-sparse factorization, then for every column $c_i$ of $C$, $|FO(c_i)|\leq k$, $C$ is $k$-row sparse and each column of $D$ is $k$-sparse.
\item Conversely, a matrix $C$ such that $|FO(c_i)| \leq k$ for all columns $c_i$ is trivially $k$-sparsely factorizable. A $k$-column sparse matrix $D$ is also trivially $k$-sparsely factorizable.
\item If $Q = CD$ is a $k$-sparse factorization and $F$ is $f$-column sparse, then $QF = C(DF)$ is a $kf$-sparse factorization of $QF$.
\item If $Q_1 = C_1D_1$ is a $k_1$-sparse factorization and $Q_2 = C_2D_2$ is a $k_2$-sparse factorization, then the matrix $(Q_1^T\: Q_2^T)^T$ is $(k_1 + k_2)$-sparsely factorizable.
In order to see this, we write 
$$\begin{pmatrix} Q_1 \\ Q_2\end{pmatrix} = \begin{pmatrix} C_1 & \\ & C_2\end{pmatrix}\begin{pmatrix}D_1 \\ D_2\end{pmatrix}.$$ 
In particular, if $Q_2$ is the identity, the compound matrix is $(k_1+1)$-sparsely factorizable.
\end{enumerate}

The following theorem establishes the running time of $N$ iterations of the form (\ref{eq0}), when the vectors $q_i$ are the columns of a $k$-sparsely factorizable matrix. 
The proof of the theorem is given in the appendix.
\begin{theorem}
\label{thm1}
Let $Q \in \mathbb{R}^{m \times n}, C \in \mathbb{R}^{m \times p}$ and $D \in \mathbb{R}^{p \times n}$ be matrices such that $Q = CD$ is a $k$-sparse factorization of $Q$, and consider iterations of the form (\ref{eq0}) that start from an arbitrary vector $x_0 \in \mathbb{R}^m$. 
If every $q_i$ in \eqref{eq0} is a column of $Q$, then the computational complexity of running $N$ iterations of \eqref{eq0} is: 
$$\mathcal{O}(Nk + (m+n)k^2).$$
With the same complexity, we can compute a $y_N$ such that $x_N = x_0 + Qy_N$, where $x_N$ denotes the vector resulting from the $N$ first iterations. 
By applying sufficiently many iterations of form \eqref{eq0} we thus obtain both the solution to the primal problem in $x$, as well as the solution to the dual problem in $y$.
\end{theorem}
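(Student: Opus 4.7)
The plan is to represent $x_t$ implicitly by its coefficient vector, never materialising $x_t$ inside the loop. Concretely I maintain $z_t \in \mathbb{R}^p$ with $x_t = x_0 + C z_t$, together with a dual vector $y_t \in \mathbb{R}^n$ satisfying $x_t = x_0 + Q y_t$. The update $x_{t+1} = x_t - \alpha_t^* C d_i$ then translates into $z_{t+1} = z_t - \alpha_t^* d_i$ and $y_{t+1} = y_t - \alpha_t^* e_i$; since every column $d_i$ of $D$ is $k$-sparse (an immediate consequence of the definition), both updates cost $\mathcal{O}(k)$, and the whole difficulty reduces to evaluating the stepsize $\alpha_t^* = -x_t^T q_i / \|q_i\|^2$ in $\mathcal{O}(k)$ per iteration.

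I would expand $x_t^T q_i = (x_0^T C) d_i + z_t^T (C^T C) d_i$. The first summand uses a precomputed dense row vector $x_0^T C$, obtainable in $\mathcal{O}(\mathrm{nnz}(C)) = \mathcal{O}(mk)$ time because $C$ is $k$-row sparse; pairing it with the $k$-sparse $d_i$ then costs $\mathcal{O}(k)$. For the second summand I split $C^T C = U + U^T - D_\Delta$, where $U$ is the upper triangular part of $C^T C$ including the diagonal and $D_\Delta$ its diagonal. The decisive observation, and the reason for singling out the forward overlap in Definition \ref{defi22}, is that row $j$ of $U$ is supported in $FO(c_j)$ and so has at most $k$ nonzero entries, which forces $\mathrm{supp}(U^T d_i) \subseteq \bigcup_{j \in \mathrm{supp}(d_i)} FO(c_j)$, a set of size at most $k$.

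Using this structural fact, I also carry the auxiliary vector $u_t := U^T z_t \in \mathbb{R}^p$ and rewrite
\[
z_t^T (C^T C) d_i \;=\; u_t^T d_i \;+\; z_t^T (U^T d_i) \;-\; z_t^T D_\Delta d_i.
\]
Provided $U^T d_i$ has been precomputed and stored as a $k$-sparse vector, all three terms reduce to sparse–dense inner products with cost $\mathcal{O}(k)$, and the subsequent update $u_{t+1} = u_t - \alpha_t^* (U^T d_i)$ likewise costs $\mathcal{O}(k)$ because only the $\leq k$ nonzero positions of $U^T d_i$ need be touched. The squared norms $\|q_i\|^2 = 2\,d_i^T (U^T d_i) - d_i^T D_\Delta d_i$ are tabulated once at the start.

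It remains to verify the setup budget. The upper triangle $U$ of $C^T C$ is assembled by accumulating, for each row of $C$, the outer product of that row with itself restricted to indices $(i,j)$ with $i \leq j$; since every row is $k$-sparse, each row contributes $\mathcal{O}(k^2)$, giving $\mathcal{O}(mk^2)$ overall. The matrix $U^T D$ is then built column by column, column $i$ costing $\sum_{j \in \mathrm{supp}(d_i)} |FO(c_j)| \leq k^2$ for a total of $\mathcal{O}(nk^2)$. The remaining precomputations ($x_0^T C$, the $n$ squared norms, and $u_0 = 0$) fit within $\mathcal{O}((m+n)k)$, so the total setup is $\mathcal{O}((m+n)k^2)$, the $N$ iterations contribute $\mathcal{O}(Nk)$, and a final reconstruction $x_N = x_0 + C z_N$ costs $\mathcal{O}(mk)$ which is absorbed into the setup term. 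The main obstacle, and the reason for the particular formulation of Definition \ref{defi22}, is precisely the triangular splitting above: a naive attempt to update $C^T C z_t$ or evaluate $C^T C d_i$ directly would require controlling the union of both forward and backward overlaps, which the hypothesis does not bound.
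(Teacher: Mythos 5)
Your proof is correct and follows essentially the same route as the paper's: the triangular splitting of $C^TC$, the observation that row $j$ of $U$ is supported in $FO(c_j)$ so that $U^Td_i$ is $k$-sparse, the maintained pair $(z_t, U^Tz_t)$, and the precomputation of $U^TD$ and of the squared norms all mirror the published argument (modulo a harmless sign slip in your convention for $\alpha_t^*$). The only cosmetic differences are that you keep $x_0$ as an explicit offset with a precomputed $x_0^TC$ where the paper augments $C$ with $I_m$ to absorb $x_0$ into the coefficient representation, and that your $U$ carries the full diagonal (compensated by $D_\Delta$) while the paper's $U$ satisfies $C^TC = U + U^T$ exactly.
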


The remarkable point about Theorem \ref{thm1} is that the running time of \emph{each iteration} is merely $\mathcal{O}(k)$, even if some columns of $Q$ are full.
Hence, if $k \ll m$, then the cost per iteration can be largely reduced through the use of a $k$-sparse factorization, and the overhead term $(m+n)k^2$ is more than compensated.

\subsection{Ensuring fast convergence by randomized updates}\label{sec:convergence}

From our discussion above, we know that after sufficiently many iterations \eqref{eq0} over all columns of $Q$, $x_t$ converges to:
\begin{equation}\label{minnorm}
x^* = \argmin_{x \in x_0 + \textrm{Im } Q} \|x\|_2 
\end{equation}
However, to ensure that we can construct an efficient algorithm based on such cheap updates, we need to guarantee that the required number of updates is not too large, as this would undermine the purpose of the fast updates.
Stated differently, we need the convergence rate of our iterations to be not too slow.

Remarkably, one can indeed ensure a sufficient convergence rate using a random sampling of the columns of $Q$.
To this end, at each iteration randomly select a column $q_i$ with probability proportional to $\|q_i\|$. 
This guarantees a convergence rate of the form
$$
\mathbb{E} \|x_{t} - x^*\|^2_2 = \left( 1-\dfrac{\sigma^2_{\min}(Q)}{\|Q\|^2_\textrm{Frob}} \right)^t \|x_0 - x^*\|^2_2, 
$$
where $\|Q\|_\textrm{Frob}= \sqrt{\trace Q^TQ}$  is the Frobenius norm and $\sigma^2_{\min}(Q)=\lambda_{\min}(Q^TQ)$ is the smallest nonzero squared singular value~\cite{Strohmer2009,Gower2015}.
The proof of this result is provided in the appendix.
There we also discuss interpretations of the here presented scheme in terms of a randomized Kacmarz or randomized coordinate descent method -- with a particular choice of update directions.

The above results states that the expected error in computing $x^*$ is decreased by an order of magnitude, e.g., by a factor of $\delta^{-1} = 10$ after a number of iterations given by 
\begin{equation}\label{eq:Ncondnum}
N_1 = \dfrac{-\log(\delta^{-1})}{\log(1-\sigma^2_{\min}(Q) / \|Q\|^2_\textrm{Frob})} \approx \mathcal{O}(\|Q\|^2_\textrm{Frob}/\sigma^2_{\min}(Q))\end{equation}
The main challenge for the construction of a fast algorithm is thus to find a matrix $Q$ spanning the desired search space, with efficient $k$-sparse factorization and low `condition number' $\|Q\|_\textrm{Frob}/\sigma_{\min}(Q)$.
Note that scaling each column of $Q$ by a different scalar will not change whether or not the updates will converge.
Neither, will it change the complexity of each update (as columns of $Q$ only matter for their directions).
However, scaling the column may change the  `condition number' of $Q$, and hence the bound on the convergence time.

\subsubsection{The underdetermined case} \label{sec:underdetermined_convergence}
Let us develop the above reasoning somewhat further for the underdetermined case. 
One seeks the minimum-norm solution $x^*$ to $Ax=b$, where $A$ is an $n$-by-$m$ matrix with full-row-rank.  
Therefore it can be decomposed as $A =\begin{pmatrix} E & F \end{pmatrix}$, where $E$ is an invertible $n \times n$ submatrix of $A$.

A matrix $Q$ whose columns span the null space of $A$ can then be constructed as:
\begin{equation}
\label{eq10}
Q = \begin{pmatrix} E^{-1}F \\ -I_{m-n} \end{pmatrix},
\end{equation}
where $I_{m-n}$ is the identity matrix of dimension $m-n$. 
We clearly have $AQ = 0$, and thus the columns of $Q$ belong to the null space of $A$.
The rank of $Q$ is $m-n$, which is the dimension of $\text{null}(A)$.

Moreover, we have that $\sigma_{\min}^2(Q)=\lambda_{\min}(  F^TE^{-T}E^{-1}F + I_{m-n} ) \geq 1$. 
The number of steps to decrease the error by one order of magnitude is therefore at most of the order of:
\begin{equation}
N_1 = \mathcal O \left (\dfrac{\|Q\|_\textrm{Frob}^2}{\sigma_{\min}^2(Q)} \right) = \mathcal O(\|E^{-1}F\|_\textrm{Frob}^2+m)
\label{eq:Ncondnum2}
\end{equation}

Note that from the elementary properties of sparse factorization that if $E^{-1}=CD$ is $k_0$-sparsely factorizable, $F$ is $f$-column sparse, then $E^{-1}F$ is $k_0f$-sparsely-factorizable and $Q$ is $k=(kf+1)$-sparsely-factorizable: 
\begin{equation}
\label{eq11}
Q = \tilde{C}\tilde{D} = \begin{pmatrix} C & 0 \\ 0 & I_{m-n}\end{pmatrix}\begin{pmatrix} DF \\ -I_{m-n}\end{pmatrix}.
\end{equation}

Hence, we have a good complexity if we can find an invertible square submatrix $E$ such that  $\|E^{-1}F\|_\textrm{Frob}$ is small, and the resulting $Q$ is $k$-sparsely factorizable, for low $k$. 

We still have to find a fairly good initial guess, however.
A simple initial solution is given by $x_0=(\begin{smallmatrix} E^{-1}b \\ 0 \end{smallmatrix})$, which can be shown to fulfill the following error bound:
\begin{flalign*} 
    \|x_0\|^2 &=\|E^{-1}b\|^2 =\|E^{-1}Ax^*\|^2  =\left \| \begin{pmatrix} I & E^{-1}F \end{pmatrix} x^* \right \|^2   \\
        &\leq \left \|\begin{pmatrix} I & E^{-1}F \end{pmatrix}\right \|_\text{Frob}^2 \|x^*\|^2 = \mathcal{O}(n + \|E^{-1}F\|^2_\text{Frob}) \|x^*\|^2.  
\end{flalign*} 
Overall, reducing the initial relative error 
$$\epsilon_0 =\|x_0-x^*\|/\|x^*\| \leq 1 + \|x_0\| / \|x^*\| =\mathcal{O}\left(\sqrt{n + \|E^{-1}F\|^2_\text{Frob}}\right)$$ 
to a prescribed value $\epsilon$, requires thus a reduction by $\mathcal O (\log (n + \|E^{-1}F\|^2_\text{Frob})/2 + \log \epsilon^{-1})$ orders of magnitude, which is also in $\mathcal O (\log (m + \|E^{-1}F\|^2_\text{Frob}) + \log \epsilon^{-1})$ given that $n \leq m$. 

In summary, denoting $\kappa= m+\|E^{-1}F\|^2_\text{Frob}$, we find that it takes $N_1 = \mathcal O (\kappa)$ iterations to decrease the error by an order of magnitude.
Further, it takes  $\mathcal O (\log (\kappa \epsilon^{-1}))$ orders of magnitude to achieve relative accuracy $\epsilon^{}$. 
Following Theorem~\ref{thm1}, the total complexity is thus $\mathcal O (\kappa \log (\kappa \epsilon^{-1}) k + m k^2)$. 

\section{Classes of sparsely factorizable matrices}
Many modern and classical methods aim at exploiting particular structure in the system matrix for fast algorithms.
Table \ref{tab:comparision} provides an overview of results known from the literature and the $k$-sparse factorization approach presented in this paper.
Interestingly, our $k$-sparse matrix factorization approach provides good complexity results for a range of different matrix types, and might thus be seen as a general 
framework for seemingly different matrix structures.
We will now discuss some classes in more detail.

Let us start with some intuitive examples first.
A simple case is the overdetermined  system $Ay=b$ where $A$ is $k$-column-sparse. 
In this case, taking $Q=A= I A$ as a trivial $k$-sparse factorization, and our algorithm can be seen as a randomized Kacmarz scheme for the normal equation $A^Tx = A^Tb$, which keeps track of the updates in the $x$ coordinates but also in the $y$ coordinates. 
In the space of $y$, this is simply coordinate descent with a cost $\mathcal O(k)$, as discussed in Section \ref{overdetcase}.
The total cost amounts to $\mathcal O (Nk)$ as the overhead cost becomes irrelevant when $C$ in the $Q=CD$ decomposition is the identity.

If $A$ is $k$-row-sparse and invertible then $Q=I A^T$ is a $k$-sparse factorization.
In this case a trivial modification of the algorithm in the proof of Theorem \ref{thm1} simply coincides again with a randomized Kacmarz scheme~\cite{Strohmer2009} (see Appendix).

\begin{table}[bt!]
    \centering
    \caption{Complexity of solving (compatible) structured linear systems with a $k$-sparse matrix factorization approach compared to known results in the literature.} 
    \label{tab:comparision}
    \begin{tabular}{c|c|c}
        Structure & k-sparse factorization & Literature \\
        \hline 
        $k$ row/column sparse & $\mathcal{O}( N k)$& $\mathcal{O}( N k)$ (randomized Kacmarz \cite{Strohmer2009}) \\
        Hierarchical & $\mathcal{O}( N \log(n)+ n \log^2(n))$& $\mathcal{O}( n \log^2(n))$ (direct method \cite{ambikasaran})\\
        semiseparable & $\mathcal{O}( N \log(n)+ n \log^2(n))$& $\mathcal{O}(n)$ \cite{Vandebril2007,Vandebril2008}\\
        Laplacian & $\mathcal O (m \log^2 n \log\log n \log(m \epsilon^{-1}))$  & \cite{Kelner2013} (similar to this paper) \\
                  & (Thm. \ref{thmLaplacian})& \cite{Cohen2014} (fastest algorithm) \\
    \end{tabular}
\end{table}

\subsection{Hierarchical matrices}

In the following, we will discuss hierarchical $\mathcal{H}_r$-matrices~\cite{Hackbusch2015}, originally introduced by Hackbusch~\cite{Hackbusch1999}, and show that they are $k$-sparsely factorizable.
Importantly, in this case $k$ depends only on the height and the degree of the hierarchical structure.

\subsubsection{Definition of an $\mathcal{H}_r$-matrix}\label{sec:31}

As the name suggests, $\mathcal{H}_r$-matrices are intimately related to hierarchical structures. 
As a hierarchy may be aptly represented as a tree we introduce these matrices here with the help of (tree-)graphs.
As we will see this also enables us to establish a connection to graph-theoretic algorithms for solving Laplacian systems in subsequent sections.

\begin{figure}
\center
\includegraphics[width = \textwidth]{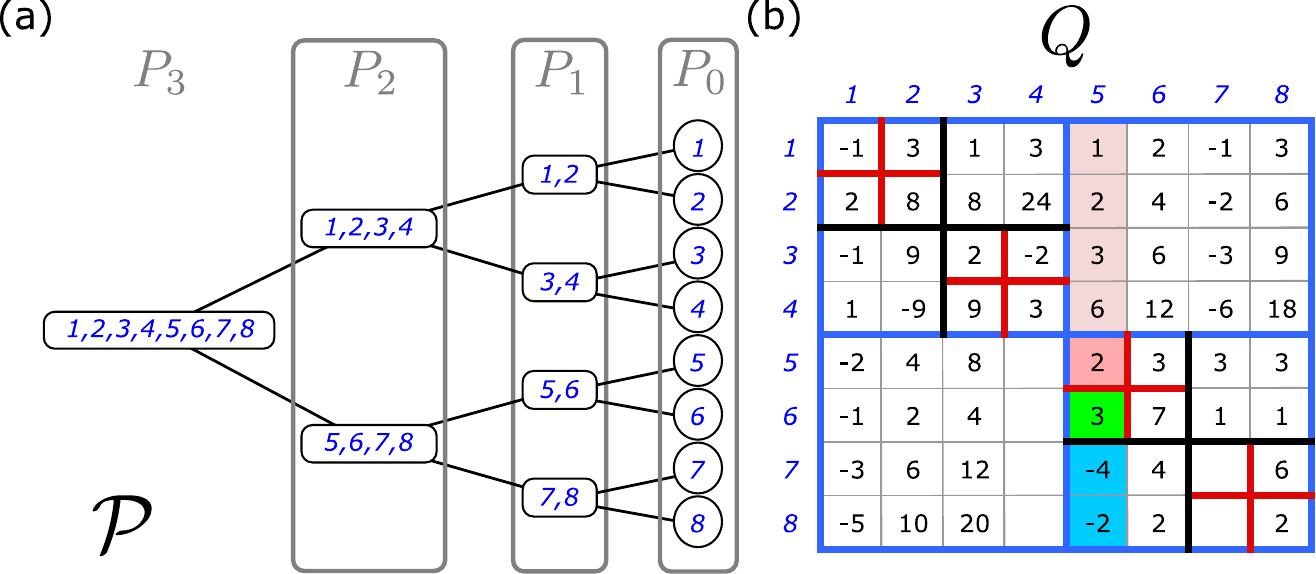}
\caption{(a) A dendrogram of $I = \{1, \ldots, 8\}$ of height $h=3$ and degree $d=2$. (b) The $\mathcal{P}$-partitioning of an $8 \times 8$ matrix where $\mathcal{P}$ is the dendrogram of $\{1, \ldots, 8\}$ in (a).}
\label{fig1}
\end{figure}

\begin{definition}[Dendrogram]
    A \emph{dendrogram} is a hierarchical partitioning $\mathcal P$ of the set $\{1,\ldots,n\}$.
    Every dendrogram comprises a sequence of increasingly finer partitions $P_h, \ldots, P_0$ starting from the coarsest (global) partition $P_h$ given by the whole set, up to the finest (singleton) partition $P_0$ into $n$ sets. 
    A dendrogram is conveniently represented by a rooted directed tree.
    The nodes of this tree at height $i$ are the subsets of partition $P_i$. 
    Thus the root ($i=h$) is the full set while the leaves ($i=0$) are the $n$ single-element subsets. 
    The children (out-neighbours) of a node at height $i$ correspond to the subsets of this node as specified by the next lower partition $P_{i-1}$.
    We call $h$ the \emph{height} of the dendrogram, and the maximum number of children of a node in the tree is denoted as \emph{maximum degree} $d$.
\end{definition}

Figure \ref{fig1}a shows an example of a dendrogram with height 3 and maximum degree 2.
For simplicity of notation and without loss of generality, we suppose throughout the paper that every node of a dendrogram has consecutive elements.

A dendrogram $\mathcal{P}$ induces a hierarchical block segmentation of a matrix $E\in \mathbb R^{n \times n}$ as follows. 
Let us denote the degree of the root node by $t\leq d$.
The rows and columns of $E$ are first block-partitioned according to the partition $P_{h-1}$:
\begin{equation}\label{Ematrix}
E = \left(\begin{array}{cccc} E_{I_1 \times I_1} & E_{I_1 \times I_2} & \hdots & E_{I_1 \times I_t} \\ E_{I_2 \times I_1} & E_{I_2 \times I_2} & \hdots & E_{I_2 \times I_t} \\ \vdots & \vdots & \vdots & \vdots \\ E_{I_t \times I_1} & E_{I_t \times I_2} & \hdots & E_{I_t \times I_t} \end{array}\right),
\end{equation}
where $I_1, \ldots, I_t$ are the elements of partition $P_{h-1}$. 
The diagonal blocks $E_{I_i \times I_i}$, are recursively sub-partitioned according to $P_{h-2}$, etc. 
This partitioning of $E$ is called \emph{$\mathcal{P}$-partitioning}.
See Figure~\ref{fig1}(b) for an illustration.

\begin{definition}(Elementary block)
    We use the term \emph{elementary block} to refer to a sub-matrix of $E$ generated by the $\mathcal{P}$-partitioning that is not further subdivided. 
    In other words it is a block of the form $E_{I_i \times I_j}$ where $I_i$ and $I_j$ are either two different sets in the same partition $P_k$, or two single-element sets of the finest partition $P_0$.
\end{definition}

\begin{definition}(Hierarchical Matrix)
    An $\mathcal{H}_r(\mathcal{P})$-matrix is a square matrix, structured according to the dendrogram $\mathcal{P}$, for which the elementary blocks have rank at most $r\in \mathbb N$. 
    We use the shorthand $\mathcal{H}_r$ when the dendrogram is clear from the context.

    Note that a sub-matrix $E_{I_i \times I_i}$ of an $\mathcal{H}_r(\mathcal{P})$-matrix $E$, where $I_i$ is a set of some partition $P_k$, is an $\mathcal{H}_r$-matrix as well.
\end{definition}

\subsubsection{Sparse factorization property}
In the following, we prove that $\mathcal{H}_r(\mathcal{P})$-matrices are $k$-sparsely factorizable, and express $k$ in terms of the rank $r$,  maximum degree $d$ and height $h$.

Recall that an $\mathcal{H}_r(\mathcal{P})$-matrix $E$ is of the form \eqref{Ematrix}.
Every non-elementary block $E_{I_i \times I_i}$ on the diagonal is recursively of the same form until the diagonal block is just a scalar.
Hence, every diagonal non-elementary block is a hierarchical matrix, too.
Further, note that every column of the full matrix $E$ is built by concatenating the corresponding columns of the $E_{I_i \times I_j}$ blocks. 
For example, the first column of $E$ can be built by stacking up the first columns of $E_{I_1 \times I_1}, E_{I_2 \times I_1}, \ldots, E_{I_t \times I_1}$. 

We can thus build a $k$-sparse factorization $E=CD$ as follows.
As every off-diagonal elementary block $E_{I_i \times I_j}$ has a rank of at most $r$, there is a matrix $D_{ij}$ such that the elementary block can be decomposed as $E_{I_i \times I_j} = C_{ij}D_{ij}$, where $C_{ij}$ has at most $r$ columns.
Thus, we know how to express all the elements in the off-diagonal blocks using this factorization.
Hence, if we knew a sparse decomposition of the diagonal blocks $E_{I_i \times I_i} = C_{ii} D_{ii}$, we could assemble the whole matrix $E$ by appropriate concatenation of the matrices $C_{ij}$.

To factorize the diagonal blocks we apply this construction recursively.
To make the recursion well defined, if the diagonal block $E$ is a scalar (a $1 \times 1$ matrix), we define $E = CD$, where $C$ is an arbitrary nonzero scalar, for instance we take $C = E$ and take $D=1$. 
Decomposing the columns of $E$ in this recursive way, we obtain a sparse factorization $E = CD$. 

We illustrate this for the case $t = 3$, hereafter.
For each $i \in \{1, 2, 3\}$, let each diagonal block $E_{I_i \times I_i} = C_{ii}D_{ii}$ be a $k_i$-sparse decomposition (recursively), and recall that each elementary block $E_{I_i \times I_j}$ $(i \neq j)$ can be factorized as $E_{I_i \times I_j} = C_{ij}D_{ij}$.
Then a $k$-sparse factorization of $E$ is given by:
\begin{flalign}    E 
    &=  \label{Cmatrix}
    \underbrace{\begin{pmatrix} 
            C_{11} & C_{12} & C_{13} & & & & & & \\
            & & & C_{22} &C_{21} & C_{23} & & & \\
            & & & & & & C_{33} & C_{31} & C_{32} 
        \end{pmatrix}}_{C}
    \underbrace{\begin{pmatrix}D_{11} & 0 & 0 \\ 
            0 & D_{12} & 0 \\ 
            0 & 0 & D_{13} \\ 
            \hline 0 & D_{22} & 0 \\ 
            D_{21} & 0 & 0 \\ 
            0 & 0 & D_{23} \\ 
            \hline 0 & 0 & D_{33} \\ 
            D_{31} & 0 & 0\\ 
            0 & D_{32} & 0
        \end{pmatrix}}_{D},
\end{flalign}
where $C_{11}, C_{22}, C_{33}$ are recursively defined according to the diagonal blocks of $E$.

Having thus found a possible factorization, the question remains what sparsity, $k$, it affords.
To answer this question, let us first consider the columns of $C$ necessary to build the first columns of $E$, and the union of their forward overlaps.
There are two types of columns in $C$ needed to build up the first block of columns in $E$.
\begin{enumerate}
    \item the columns in the $\begin{pmatrix}C_{11} & 0 & 0\end{pmatrix}^T$ block. 
    Their forward-overlap is $k_1+r(l-1)$, where $k_1$ is the sparsity of the factorization of $C_1$, and the $r(l-1)$ term accounts for the overlap with the $(l-1)$ $r$-column matrices $C_{12}$ and $C_{13}$. 
    \item The columns in the blocks $\begin{pmatrix}0 & C_{21} & 0\end{pmatrix}^T$ and $\begin{pmatrix}0 &0 & C_{31} \end{pmatrix}^T$. 
    Their forward overlap is $r(l-1)$ at most.
\end{enumerate}
As this argument holds for any column of $E$, the factorization $E=CD$ is $k$-sparse for  $k=\max_i k_i +r(l-1)$, where $k_i$ is determined recursively from the decomposition of the diagonal block $E_{I_i \times I_i}$.
Unravelling the recursion over all $h$ levels, we find that $k=rd(d-1)(h + 1)$, where $d$ is the maximal degree of the dendrogram, as before.

Throughout the paper, in a $k$-sparse factorization  $E = CD$ of an $\mathcal{H}_r(\mathcal{P})$-matrix, the matrix $C$ is supposed to be of the generic form \eqref{Cmatrix}, for an accordingly determined degree $d$. 
We will call this type of matrix a \textit{C-matrix}.
In the Appendix we prove that the number $p$ of columns of $C$ in the recursive construction in \eqref{Cmatrix} is bounded by $p \leq rd^2n$. 

We formalize the above findings in the following theorem.
\begin{theorem}
\label{thm41}
Let $E \in \mathbb{R}^{n \times n}$ be an $\mathcal{H}_r(\mathcal{P})$-matrix with a dendrogram $\mathcal{P}$ of height $h$ and maximum degree $d$. 
Then, there are matrices $C \in \mathbb{R}^{n \times p}$ and $D \in \mathbb{R}^{p \times n}$ such that $p \leq rd^2n$ and the factorization $E = CD$ is $k$-sparse for $k = rd(d-1)(h+1)$.
\end{theorem}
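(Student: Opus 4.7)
The plan is to prove the theorem by induction on the height $h$ of the dendrogram $\mathcal{P}$, formalising the block construction already sketched in the paragraphs preceding the statement. For the base case $h = 0$ the matrix $E$ is a single scalar; taking $C = [E]$ and $D = [1]$ makes $p = 1$ and the factorization trivially $1$-sparse, so both the column-count and the $k$-sparsity bound hold.

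For the inductive step, assume the claim for all dendrograms of height $\leq h-1$. Let $t \leq d$ denote the number of children of the root and block-partition $E$ according to \eqref{Ematrix}. Each diagonal block $E_{I_i \times I_i}$ is itself an $\mathcal{H}_r$-matrix whose dendrogram has height $\leq h-1$, so the inductive hypothesis supplies a $k_i$-sparse factorization $E_{I_i \times I_i} = C_{ii} D_{ii}$ with $k_i \leq rd(d-1)h$ and $p_i \leq rd^2|I_i|$ columns. Each off-diagonal block, having rank $\leq r$, factorizes as $E_{I_i \times I_j} = C_{ij} D_{ij}$ with $C_{ij}$ having at most $r$ columns. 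Assemble $C$ and $D$ in the pattern of \eqref{Cmatrix} extended to arbitrary $t$, crucially placing each diagonal factor $C_{ii}$ \emph{before} the off-diagonal factors $C_{ij}$ ($j \neq i$) within row-block $I_i$.

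For the column count one has $p = \sum_i p_i + rt(t-1)$ at this level; unrolling the recursion yields $p \leq n + r \sum_v t_v(t_v - 1)$, where the sum is over internal nodes of the dendrogram. Since a tree with $n$ leaves has at most $n - 1$ branching (i.e., $t_v \geq 2$) nodes, each of degree $t_v \leq d$, this gives $p \leq n(1 + rd(d-1)) \leq rd^2 n$ for $r,d \geq 1$.

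The main obstacle is verifying the $k$-sparsity bound, and this is where the chosen column ordering pays off. Fix a column $d_j$ of $D$ associated with a column of $E$ inside block $I_{j'}$. Its support is distributed across the rows of $D_{j'j'}$ and the rows of each $D_{ij'}$ with $i \neq j'$. The $D_{j'j'}$ part selects a subset $S$ of columns of $C_{j'j'}$ whose forward-overlap union \emph{restricted to $C_{j'j'}$} is bounded by $k_{j'}$ by induction; in the ambient $C$, each column of $S$ additionally forward-overlaps with every column of the $t-1$ off-diagonal blocks $C_{j'j}$ ($j \neq j'$), adding exactly $r(t-1)$ new columns to the union. For every other row-block $I_i$ ($i \neq j'$), the support of $d_j$ selects at most $r$ columns in $C_{ij'}$, and these share a common forward-overlap set of at most $r(t-1)$ columns --- namely the off-diagonal blocks in row $I_i$ (the diagonal $C_{ii}$ precedes them in the ordering and so contributes no forward overlap). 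Summing,
$$\left|\bigcup_{i \in \mathrm{supp}(d_j)} FO(c_i)\right| \;\leq\; k_{j'} + r(t-1) + (t-1)\cdot r(t-1) \;=\; k_{j'} + rt(t-1) \;\leq\; rd(d-1)(h+1),$$
which closes the induction. The delicate point is precisely that the diagonal-before-off-diagonal ordering guarantees off-diagonal factors never forward-overlap outside their own row-block; the inductive bound on $k_{j'}$ inside $C_{j'j'}$ then composes with a single additive contribution $r(t-1)$ per row-block touched by $\mathrm{supp}(d_j)$, producing the clean recurrence $k(h) \leq k(h-1) + rd(d-1)$.
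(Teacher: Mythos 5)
Your proof is correct and follows essentially the same route as the paper: the recursive block construction of \eqref{Cmatrix} with the diagonal factor placed before the off-diagonal factors in each row-block, a forward-overlap count of $k_{j'} + rt(t-1)$ per level, and an induction for the column bound (the paper's Lemma in the appendix proves $p \leq rd^2n$ by a slightly different induction on $n$, but the counting is equivalent). If anything, your accounting of the union of forward overlaps is spelled out more carefully than the paper's inline sketch, and it reaches the same bounds.
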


\subsection{Semiseparable matrices}
Another important matrix class which has received much attention in the literature are semi-separable matrices, whose inverses are given by tridiagonal matrices~\cite{Vandebril2008,Vandebril2007} and thus can be solved in linear time.

\begin{definition}\cite{Vandebril2005}
\label{defi600}
An $n \times n$ matrix $E$ is called $(p,q)$-semiseparable if the following relations are satisfied: $$\text{rank}(E(1: i+q-1, i:n)) \leq q \text{ and } \text{rank}(E(i:n, 1: i+p-1)) \leq p$$ for all feasible $1 \leq i \leq n$.
\end{definition}

\begin{theorem}\label{thm:semsep}
An $n \times n$ matrix that is $(p,q)$-semiseparable is an $\mathcal{H}_r(\mathcal{P})$-matrix where $r = \max\{p,q\}$ and $\mathcal{P}$ is a binary dendrogram.
\end{theorem}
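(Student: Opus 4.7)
The plan is to explicitly construct the binary dendrogram $\mathcal P$ by recursively bisecting the index set $\{1, \ldots, n\}$, and then verify that every elementary block produced by the resulting $\mathcal P$-partitioning has rank at most $r = \max\{p,q\}$. The construction is straightforward; the bookkeeping is purely about indices.

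First I would define $\mathcal P$ by declaring that each non-leaf node corresponding to a consecutive index interval $I = \{a, a+1, \ldots, b\}$ has exactly two children, the intervals $\{a, \ldots, a + \lceil|I|/2\rceil - 1\}$ and $\{a + \lceil|I|/2\rceil, \ldots, b\}$, until all leaves are singletons. By construction, every node of $\mathcal P$ consists of consecutive indices and the maximum degree is $d = 2$. It remains to verify the rank constraint on the elementary blocks generated by the induced $\mathcal P$-partitioning of $E$. Every elementary block is either (i) a $1\times 1$ diagonal block $E_{\{i\}\times\{i\}}$, which trivially has rank at most $1 \leq r$, or (ii) an off-diagonal block $E_{I_i \times I_j}$ where $I_i$ and $I_j$ are the two children of a common non-leaf node.

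The key observation is that such an off-diagonal elementary block is a submatrix of one of the strictly triangular blocks appearing in Definition \ref{defi600}. Indeed, write $I_i = \{a_i, \ldots, b_i\}$ and $I_j = \{a_j, \ldots, b_j\}$ with $b_i < a_j$ (the upper-triangular case). Setting $c = a_j$ and applying the $(p,q)$-semiseparability property with index $i = c$, we have
\begin{equation*}
    \mathrm{rank}\bigl(E(1:c+q-1,\, c:n)\bigr) \leq q.
\end{equation*}
Since $E_{I_i \times I_j} = E(a_i:b_i,\, a_j:b_j)$ and $a_i \geq 1$, $b_i < c \leq c+q-1$, and $b_j \leq n$, the block $E_{I_i \times I_j}$ is a submatrix of $E(1:c+q-1,\, c:n)$ and therefore has rank at most $q \leq r$. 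The symmetric argument with the second rank condition handles the lower-triangular case $b_j < a_i$, yielding a bound of $p \leq r$.

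Combining these cases, every elementary block has rank at most $r$, so $E$ is an $\mathcal H_r(\mathcal P)$-matrix. The only mild subtlety is what happens when $c + q - 1 > n$, but this merely means the big block has fewer than $c + q - 1$ rows, and the elementary block is still a submatrix; the rank bound is unaffected. I expect this is the only step that requires any care, and even this amounts to a one-line observation about submatrix ranks.
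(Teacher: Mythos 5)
Your proposal is correct and follows essentially the same route as the paper: recursively bisect the index set into a binary dendrogram and observe that each off-diagonal elementary block sits inside one of the rank-bounded triangular blocks from Definition \ref{defi600} (your choice $i=c=a_j$, adjusted to a feasible index when $c>n-q+1$, matches the paper's admissible range $\lfloor n/2\rfloor-q+1\leq i_1\leq\min\{\lfloor n/2\rfloor,n-q+1\}$). The only cosmetic difference is that the paper sets up an explicit recursion by noting the diagonal blocks are again $(p,q)$-semiseparable, whereas you bound every elementary block at every level directly from the global semiseparability of $E$; both are valid.
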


The proof is given in the appendix.
It follows directly that semi-separable matrices are $k$-sparsely factorizable, too.
We note that, due to their remarkable structural properties, algorithms solving semiseparable systems in linear time are well known in the literature \cite{Vandebril2007, Vandebril2008}.

\subsection{Reduced incidence matrices of trees and their inverse}

In what follows, we define a reduced incidence matrix of a tree, and show that it is $k$-sparsely factorizable as it is an $\mathcal{H}_1(\mathcal{P})$-matrix where $\mathcal{P}$ is a binary dendrogram ($d = 2$). 
We remark that, to the best of our knowlege, this connection between hierarchical matrices and incidence matrices of trees has no been reported in the literature so far.
The importance of this observation arises in the context of Laplacian systems, as we will see in a later section.

We first give the definitions of an incidence matrix of a graph and of a reduced incidence matrix of a tree.

\begin{definition}[Incidence matrix, reduced incidence matrix]
\label{defi39}
Let $G$ be a positively weighted undirected graph on $n$ nodes and $m$ edges with an arbitrary direction chosen for each edge. An incidence matrix $B \in \mathbb{R}^{n \times m}$ of $G$ is a node-by-edge matrix such that given an edge $e_i$ of $G$ from node $i_1$ to node $i_2$ with weight $w_i$, the $i$th column of $B$ takes value $-\sqrt{w_i}$ at the source node $i_1$, value $\sqrt{w_i}$ at the target node $i_2$ and value $0$ at any other node.

A reduced incidence matrix of a graph $G$ is an incidence matrix of $G$ from which one row has been removed.
\end{definition}

To reveal the hierarchical structure in the reduced incidence matrix of a tree, one has to recursively split the nodes of the tree in a balanced way. 
A classic way to do so is provided by the tree-vertex-separator lemma.

\begin{lemma}[Tree Vertex Separator Lemma, \cite{Jordan1869,Chung1990}] 
\label{lem3}
For any forest $T$ with $n \geq 2$ nodes, one can  divide $T$ into two  forests both of at most $2n/3$ nodes, by removing at most one node $d$, which can be computed in $\mathcal{O}(n)$ time.
\end{lemma}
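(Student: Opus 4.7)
The plan is to reduce the forest case to the tree case and then prove the tree case by a descent along subtree sizes, analogous to the classical centroid argument. Throughout, the point is to argue that a single node removal plus a light combinatorial packing step always produces a balanced two-forest decomposition.

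First I would dispose of the forest case when no single component is too large. Let $T$ have connected components $T_1, \dots, T_c$ with sizes $n_1, \dots, n_c$ summing to $n$. If every $n_i \leq 2n/3$, then \emph{no} node needs to be removed: sort the components by size and scan them, assigning each in turn to the lighter of two initially empty bags. Because every individual component fits under $2n/3$, a standard first-fit analysis shows that after processing the components the two bags have sizes in $[\lfloor (n-M)/2 \rfloor, \lceil (n+M)/2 \rceil]$ where $M \leq 2n/3$ is the largest component, and in particular both are at most $2n/3$. If instead some component $T'$ has $|T'| > 2n/3$, then the remaining components have total size less than $n/3$; I set these aside momentarily, solve the tree problem for $T'$, and then append the small components greedily to the lighter of the two resulting bags. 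Since each set aside component has size less than $n/3$ and each bag starts at size at most $2|T'|/3 \leq 2n/3$, a straightforward accounting shows the augmented bags still respect the $2n/3$ bound.

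The core step is thus the tree case. Root $T'$ at an arbitrary vertex and compute the subtree sizes $s(v)$ by a single postorder traversal in $\mathcal{O}(|T'|)$ time. Now descend from the root: at each step, if the current vertex $v$ has some child $c$ with $s(c) > |T'|/3$, move to $c$; otherwise stop and set $d := v$. Since $s$ strictly decreases along this walk and is bounded below, the process terminates, and at the stopping vertex $d$ every child $c$ satisfies $s(c) \leq |T'|/3$ while $s(d) > |T'|/3$ (the last inequality coming from the fact that either $d$ is the root, whose subtree is everything, or $d$ was reached from its parent via the $>|T'|/3$ criterion). Removing $d$ leaves the ``outer'' component of size $|T'| - s(d) < 2|T'|/3$ together with the child subtrees, each of size at most $|T'|/3$.

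It remains to pack the child subtrees together with the outer component into two bags of total size at most $2|T'|/3$. Place the outer component in bag $A$; then add the child subtrees one by one to $A$ while this does not exceed $2|T'|/3$, and once it would, place all further child subtrees into bag $B$. Either all child subtrees fit in $A$, in which case $B$ is empty, or the first refusal occurs when $|A|$ is already greater than $2|T'|/3 - |T'|/3 = |T'|/3$ (because the rejected subtree has size at most $|T'|/3$), which forces $|B| = (|T'|-1) - |A| < 2|T'|/3$. Either way, both bags are within $2|T'|/3 \leq 2n/3$. Combining with the forest reduction above, and noting that subtree sizes, the descent, and the packing are each linear-time scans, the total cost is $\mathcal{O}(n)$. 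The main obstacle in writing the full proof cleanly is the bookkeeping in Case B of the forest reduction, where one must verify that appending the leftover small components to bags already close to the $2n/3$ threshold does not push either past it; this reduces to the elementary inequality $|T'| > 2n/3 \Rightarrow n - |T'| < n/3$ but deserves explicit mention.
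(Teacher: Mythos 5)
The paper does not actually prove this lemma---it is imported from \cite{Jordan1869,Chung1990}---so your argument can only be judged on its own terms. Its architecture (reduce to the case of a single oversized tree, find a centroid-like vertex by descending along children whose subtree size exceeds $|T'|/3$, then pack the outer component and the child subtrees into two bags) is the standard separator argument, and the tree case is handled correctly, including the first-refusal packing and the $\mathcal{O}(n)$ accounting.

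There is, however, one genuine gap, in Case~1 of your forest reduction. You assert that greedy assignment to the lighter bag leaves both bags of size at most $\lceil (n+M)/2 \rceil$ and that this is ``in particular at most $2n/3$'' because $M \leq 2n/3$. But $(n+M)/2 \leq 2n/3$ holds only when $M \leq n/3$; for $M$ between $n/3$ and $2n/3$ your stated bound yields only $5n/6$, so the conclusion does not follow from what you wrote. The claim itself is true, but it needs a finer observation: if the final heavier bag consists of a single component, its size is $M \leq 2n/3$; if it contains at least two components, then at the moment its last component (of size $s$) was added there were already at least three components of size at least $s$ in play (one previously placed in that bag, at least one in the other bag, and $s$ itself), so $3s \leq n$ and the heavier bag has size at most $(n+s)/2 \leq 2n/3$. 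Note that this argument genuinely requires processing the components in decreasing order of size---unsorted greedy-to-the-lighter-bag can fail, e.g.\ on component sizes $0.1n,\,0.1n,\,0.6n,\,0.2n$---so to preserve the $\mathcal{O}(n)$ bound you should sort the integer sizes by counting or bucket sort rather than by comparison sort, a point worth one sentence in the final write-up.
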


\begin{proposition}
\label{prop1}\label{prop47}
A reduced incidence matrix of an $n$-edge tree is, for some ordering of the nodes and edges, an upper-triangular $\mathcal{H}_1(\mathcal{P})$-matrix for a binary dendrogram $\mathcal{P}$  with height $\mathcal{O}(\log n)$. The inverse of the reduced incidence matrix is, for the same ordering of nodes and edges, also an upper-triangular $\mathcal{H}_1(\mathcal{P})$-matrix. The dendrogram $\mathcal{P}$ and both hierarchical matrices can be computed in time $\mathcal{O}(n \log n)$.  
Thus, a $\mathcal{O}(\log n)$-sparse factorization of (the inverse of) such a hierarchical matrix is computable in time $\mathcal{O}(n \log^2 n)$.
\end{proposition}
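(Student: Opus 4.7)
Plan. I would build a binary dendrogram $\mathcal{P}$ by recursive application of the Tree Vertex Separator Lemma (Lemma~\ref{lem3}) and then verify that, under an appropriate ordering, both $B$ and $B^{-1}$ are upper-triangular $\mathcal{H}_1(\mathcal{P})$-matrices. The key combinatorial reason it works is that every edge-cut produced by the recursion consists of edges sharing one common endpoint (the separator), and that all descendants of any node lie in a contiguous block of the ordering.

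Setup and construction of $\mathcal{P}$. Root $T$ at the node $r$ whose row is removed (for concreteness a leaf, else the argument begins on a forest). Each non-root node $v$ has a unique parent edge $e_v$; index both the $v$-th row of $B$ and the $v$-th column (corresponding to edge $e_v$) by the same integer, so that column $v$ has nonzero entries $\pm\sqrt{w_v}$ only at rows $v$ and $\mathrm{parent}(v)$. Recursively, for a subset $V'$ (initially all non-root nodes) apply Lemma~\ref{lem3} to the forest it induces, obtaining a separator $d$ and a partition of the components of $V'\setminus\{d\}$ into two groups $A,B$ of sizes at most $2|V'|/3$. Place $d$ together with the component of $V'\setminus\{d\}$ containing $\mathrm{parent}(d)$ (if that parent lies in $V'$) into group $A$; the other components go to $A$ or $B$ subject only to the size constraints. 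Let $V_A=\{d\}\cup\bigcup_{i\in A}C_i$ take the smaller indices, $V_B=\bigcup_{i\in B}C_i$ take the larger, and recurse on each. The resulting dendrogram is binary, has height $O(\log n)$, and is constructed in $O(n\log n)$ time.

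Step 1: $B$ is upper-triangular $\mathcal{H}_1(\mathcal{P})$. By construction parents precede their children in the ordering, so column $v$'s only above-diagonal nonzero entry is at row $\mathrm{parent}(v)$, and $B$ is upper triangular. At every split the block $(V_B\times V_A)$ vanishes: for each $v\in V_A$ the parent of $v$ is $d$, an ancestor of $d$ inside $V_A$, or lies outside $V'$ altogether, never in $V_B$. The block $(V_A\times V_B)$ has nonzero entries only in row $d$, since the only edges straddling the cut are those from $d$ to the roots of the components assigned to $B$; this block therefore has rank at most one.

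Step 2: $B^{-1}$ is upper-triangular $\mathcal{H}_1(\mathcal{P})$. Solving $B g_u = e_u$ routes one unit of current from $r$ to $u$ along the unique tree path, hence $(B^{-1})_{v,u}=\pm 1/\sqrt{w_v}$ iff $v$ is an ancestor of $u$ in $T$ (including $v=u$), and zero otherwise. Upper triangularity follows. The block $(V_B\times V_A)$ is zero because the descendants of any $v\in V_B$ stay inside the component of $V_B$ that contains $v$. For the block $(V_A\times V_B)$, the key observation is that every $u\in V_B$ whose component has top-vertex $\rho$ with $\mathrm{parent}(\rho)=d$ shares the \emph{same} set of ancestors inside $V_A$ — namely $\{d\}$ together with the ancestors of $d$ in the upper component; remaining $u$'s contribute zero columns. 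Every nonzero row of the block is thus a scalar multiple of a single indicator row-vector, giving rank at most one.

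Complexity and main obstacle. Applying Theorem~\ref{thm41} with $r=1$, $d=2$, and $h=O(\log n)$ yields a $k=O(\log n)$-sparse factorization with $p\leq 4n$; the $O(n\log n)$ nonzero entries of $C,D$ can be read off the dendrogram in $O(\log n)$ time each, giving total cost $O(n\log^2 n)$. The principal obstacle I anticipate is the combinatorial analysis of the off-diagonal block of $B^{-1}$: establishing that all nodes in $V_B$ below the separator share a common ancestor chain within $V_A$. This invariant is specific to trees (it fails as soon as cycles are present) and requires the deliberate placement of both $d$ and the upper component in $V_A$ at every level of the recursion.
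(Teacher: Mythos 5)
Your proposal is correct and follows the same overall route as the paper: recursive application of the tree vertex separator lemma to build a balanced binary dendrogram, under which the reduced incidence matrix (with each non-root node paired to its parent edge) becomes block upper-triangular with rank-one off-diagonal blocks, and likewise for its inverse. Two execution details differ and are worth noting. First, the paper assigns the separator $d$ to the \emph{second} block, so that the upper-right block carries the single entry of the edge from $d$ to its father; you instead place $d$ in the \emph{first} block together with the component containing its parent, so that all crossing entries sit in row $d$ of the upper-right block (still rank one). Your placement is in fact the more robust one: with $d$ in the second block, any child-component of $d$ pushed into the first block by the balance constraint (think of a star) would put a nonzero at row $d$ in the lower-left block and destroy upper-triangularity, a case the paper's description glosses over and your invariant (the parent of every first-block vertex is never in the second block) handles explicitly. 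Second, for the inverse the paper uses the block-triangular inverse recursion $F=-E_{I_1\times I_1}^{-1}E_{I_1\times I_2}E_{I_2\times I_2}^{-1}$, whose rank-one property is inherited from $E_{I_1\times I_2}$ and whose factors come from two triangular solves per level; you instead derive the explicit ancestor-path description of $B^{-1}$ and check directly that every nonzero column of the off-diagonal block is a multiple of the indicator of the separator's ancestor chain. The paper's argument is shorter; yours is more self-contained and exposes the combinatorial source of the rank-one structure. The complexity accounting ($\mathcal{O}(n\log n)$ for the dendrogram and both hierarchical representations, $\mathcal{O}(n\log^2 n)$ for the $\mathcal{O}(\log n)$-sparse factorization via Theorem~\ref{thm41}) matches the paper.
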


\begin{proof}
Note that in this proof we consider $T$ as an undirected tree with root $v$. 
A tree $T$ of $n$ nodes has $n-1$ edges, and hence is described by an $n$-by-$(n-1)$ incidence matrix. 
By convention we assign an arbitrary direction to each edge, encoded by the signs of the entries in the incidence matrix. 
However, the chosen direction does not play any role for the results in the following.
By removing a row from the incidence matrix, we obtain a square reduced incidence matrix of dimension $n-1$. 

We now split the tree $T$ into two forests $T_1$ and $T_2$ following the procedure of the Tree Vertex Separator Lemma.
Each of $T_1$, $T_2$ will accordingly have no more than $2n/3$ nodes. 
We assign the separator node $d$ (if any) to $T_2$. 
We now order the nodes in our reduced incidence matrix in two blocks according to this split:
\begin{equation*}
E = \begin{pmatrix}{E}_{I_1 \times I_1} & {E}_{I_1 \times I_2} \\ 0 & {E}_{I_2 \times I_2}\end{pmatrix},
\end{equation*}
where $E_{I_i \times I_i}$ (for $i=1,2$) is the reduced incidence matrix of $T_i$ and $E_{I_1 \times I_2}$ is a rank-$1$ matrix with at most one non-zero entry corresponding to the edge linking $d$ to its father.
Here, the indices of the edges have been assigned as follows: an edge connecting node $i$ and $j$ is indexed by $j$, if $j$ is one step further away from the root than $i$ (i.e. $j$ is the `child' of $i$).

We repeat this argument recursively and thereby create a dendrogram $P$ on the nodes of $T$ of height $\mathcal{O}(\log n)$, and a corresponding upper triangular $\mathcal{H}_1(\mathcal{P})$-matrix structure for $E$. 
From the ordering of edges, we see that the $i$th node is always incident to the $i$th edge, thus the diagonal entry of $E$ is $\pm \sqrt{w_i}$, making it easily invertible. 
Indeed, the inverse of $E$ can be computed recursively as 
$$E^{-1} = \begin{pmatrix} {E}_{I_1 \times I_1}^{-1} & F \\ 0 & {E}_{I_2 \times I_2}^{-1}\end{pmatrix} ,$$
with $F=-E_{I_1 \times I_1}^{-1}E_{I_1 \times I_2}^{} E_{I_2 \times I_2}^{-1}$.
Note that we may write $F= uv^T$ as it is clearly of rank one at most, thus leading to an upper-triangular $\mathcal{H}_1(\mathcal{P})$-matrix for $E^{-1}$ as well.
Both for $E$ and $E^{-1}$, every of the $\mathcal{O}(\log n)$ steps of the recursion takes $\mathcal{O}(n)$,  required to finding the tree vertex separators and (in case of $E^{-1}$) computing $u$ and $v$, solutions of triangular systems.
Therefore we get a total cost of $\mathcal{O}(n \log n)$.

Finally, using the procedure outlined above we can decompose $E^{-1}=CD$.
Using $E_{I_1 \times I_1}^{-1} = C_{11}D_{11}$ and $E_{I_2 \times I_2}^{-1} = C_{22}D_{22}$, we recursively construct:
\begin{equation*}
    E^{-1} = \begin{pmatrix} C_{11} & & u \\ & C_{22} & \end{pmatrix} \begin{pmatrix} D_{11} & \\ & D_{22} \\ & v^T \end{pmatrix}.
\end{equation*}
By unfolding this recursion we can see that this leads to a forward-overlap of size $\mathcal{O}(\log(n))$ in $C$, and an $\mathcal{O}(\log(n))$ column-sparse matrix $D$. Similarly, a $\mathcal{O}(\log n)$-sparse factorization can be obtained for $E$.
\end{proof}

\section{Fast iterative linear solvers on hierarchical systems}\label{sec:iterative_solvers}

To illustrate the usefulness of our results, in the following we showcase two concrete application scenarios in which the above developed theory can be employed.

\subsection{A strategy for solving underdetermined systems}
In the following, we focus again on the case of an underdetermined system $Ax=b$.
We devise a strategy that assumes a decomposition of the $n$-by-$m$ full-rank matrix $A$ (with $n < m$) of the form $A =\begin{pmatrix} E & F \end{pmatrix}$, where $E$ is an invertible $n \times n$ submatrix of $A$.
In particular, let us consider the case where $E^{-1}$ is hierarchical. 
We can then combine Theorem \ref{thm1} and the subsequent discussion, and Theorem~\ref{thm41} to obtain the following result.

\begin{theorem}
\label{thm32}
Let $A = \begin{pmatrix}E & F \end{pmatrix}$ be an $n \times m$ matrix with $n < m$, where $E \in \mathbb{R}^{n \times n}$ is invertible and $E^{-1}$ is an $\mathcal{H}_r(\mathcal{P})$-matrix with an associated dendrogram $\mathcal{P}$ of maximum degree $d$ and height $h$. Further, let $F \in \mathbb{R}^{n \times (m-n)}$ be $f$-column sparse. 
Then, we can compute an approximation of $x^* := \arg\min_{s.t. Ax = b} ||x||$ by applying $N$ iterations of the form (\ref{eq0}), in time 
$$\mathcal{O}(N  frd^2h + mf^2r^2d^4h^2) + Cost(CD),$$
where $Cost(CD)$ is the cost of computing a $(rd^2(h+1))$-sparse factorization of $E^{-1}$. The number of iterations to gain one order of magnitude on the error is at most $N_1 = \mathcal{O}(\|E^{-1}F\|_\text{Frob}^2+m)$. 
\end{theorem}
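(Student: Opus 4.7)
The plan is to assemble the result by composing Theorem~\ref{thm41} with Theorem~\ref{thm1} and the convergence analysis of Section~\ref{sec:underdetermined_convergence}. The strategy of Section~\ref{sec:underdetermined_convergence} tells us exactly which matrix $Q$ to use as our search directions: we take $Q = \begin{pmatrix} E^{-1}F \\ -I_{m-n} \end{pmatrix}$, whose columns span $\text{null}(A)$, and we pick the initial feasible point $x_0 = \begin{pmatrix} E^{-1}b \\ 0 \end{pmatrix}$. Iterations of the form~\eqref{eq0} along the columns of this $Q$ then converge to $x^*$, so all that remains is to bound (i) the cost of a single sweep of $N$ iterations, and (ii) the number of iterations $N_1$ needed to gain one order of magnitude on the error.

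For the per-iteration cost, I would track the sparsity parameter of $Q$ through the elementary closure properties of $k$-sparse factorization listed after Definition~\ref{defi22}. By Theorem~\ref{thm41}, $E^{-1}$ admits a $k_0$-sparse factorization $E^{-1} = CD$ with $k_0 = rd(d-1)(h+1) = \mathcal{O}(rd^2 h)$; this factorization is produced in time $Cost(CD)$. Since $F$ is $f$-column sparse, property~4 (right-multiplication by an $f$-column-sparse matrix) gives that $E^{-1}F = C(DF)$ is $k_0 f$-sparsely factorizable, hence $\mathcal{O}(frd^2 h)$-sparsely factorizable. Stacking with $-I_{m-n}$ via property~5 then yields a $k$-sparse factorization of $Q$ with $k = k_0 f + 1 = \mathcal{O}(frd^2 h)$. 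Plugging this $k$ into Theorem~\ref{thm1} gives a cost of $\mathcal{O}(Nk + (m+n)k^2) = \mathcal{O}(Nfrd^2 h + mf^2 r^2 d^4 h^2)$ for $N$ iterations (using $n \leq m$), on top of the one-off setup cost $Cost(CD)$ for producing the factorization of $E^{-1}$.

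For the convergence rate, I would simply invoke the analysis already done in Section~\ref{sec:underdetermined_convergence}. The specific choice of $Q$ in~\eqref{eq10} was shown to satisfy $\sigma_{\min}^2(Q) \geq 1$ and $\|Q\|_\text{Frob}^2 = \|E^{-1}F\|_\text{Frob}^2 + (m-n)$, whence the number of iterations required to reduce the expected squared error by an order of magnitude is $N_1 = \mathcal{O}(\|Q\|_\text{Frob}^2 / \sigma_{\min}^2(Q)) = \mathcal{O}(\|E^{-1}F\|_\text{Frob}^2 + m)$, as in~\eqref{eq:Ncondnum2}. This is exactly the bound stated in the theorem.

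I do not anticipate a real obstacle: the proof is essentially a bookkeeping exercise that chains together three previously-established results. The only subtle point is making sure that the stacking/right-multiplication properties of $k$-sparse factorizations are invoked in the right order so that the final $k$ comes out as $\mathcal{O}(frd^2 h)$ rather than something larger; in particular one must avoid ever expanding $DF$ or $E^{-1}F$ explicitly and instead keep everything in factored form, so that $DF$ remains a product of two sparse matrices and the identity block of $Q$ is never materialized. Once that is handled, the complexity $\mathcal{O}(Nfrd^2h + mf^2 r^2 d^4 h^2) + Cost(CD)$ follows directly from Theorem~\ref{thm1}.
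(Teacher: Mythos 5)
Your proposal is correct and follows essentially the same route as the paper's own proof: obtain the $\mathcal{O}(rd^2h)$-sparse factorization of $E^{-1}$ from Theorem~\ref{thm41}, build the $(k_0f+1)$-sparse factorization of $Q$ exactly as in \eqref{eq11}, start from $x_0 = \bigl(\begin{smallmatrix} E^{-1}b \\ 0 \end{smallmatrix}\bigr)$, and plug into Theorem~\ref{thm1} together with the conditioning bound \eqref{eq:Ncondnum2}. Your added caution about keeping $DF$ in factored form rather than densifying it is sensible and consistent with the paper's construction.
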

 
\begin{proof}
Following Theorem \ref{thm41}, let $E^{-1} = CD$ be a $k$-sparse factorization with  $k = rd(d-1)(h+1)=\mathcal O(rd^2h)$. 
By the second elementary property of the sparse factorization (see Property 2 on page \pageref{proper2}), we know that $C$ is $k$-row sparse and that each column of $D$ is $k$-sparse. 
A feasible solution to $Ax = b$ is then given by $x_0 = (\begin{smallmatrix} E^{-1}b \\ 0 \end{smallmatrix})$ where $E^{-1}b= CDb$ is computed in $\mathcal{O}(kn)$ time.

Now, consider the matrix $Q$ given in (\ref{eq10}). 
From our discussion above we know that the columns of $Q$ are a basis  of $\text{null}(A)$ and that the matrix $Q$ is $(kf+1)$-sparsely factorizable. 
Let $Q = \tilde{C}\tilde{D}$ be the $(kf+1)$-sparse factorization given in (\ref{eq11}).
We start from the vector $x_0$ and iteratively pick a column $q$ of $Q$ and perform an iteration of the form $(\ref{eq0})$. 
Theorem \ref{thm1} with $Q, \tilde{C}$ and $\tilde{D}$ then shows that the running time is given by 
$$\mathcal{O}(N  fk + mf^2k^2) + Cost(CD).$$ 
\end{proof}

\subsection{Square hierarchical systems}
The present technique can be also applied to solve square systems $Ax=b$, where $A$ is hierarchical and invertible. 

\begin{theorem}
The system $Ay=b$, where $A$ is an invertible $n$-by-$n$ $\mathcal{H}_r(\mathcal{P})$-matrix with $\mathcal{P}$ a dendrogram of degree $d$ and height $h$, can be solved iteratively in time $$\mathcal{O}(N rd^2h + nr^2d^4h^2) + Cost(CD),$$ where $N$ is the number of iterations and $Cost(CD)$ is the running time needed to compute a $k$-sparse factorization of $A$ with $k = rd(d-1)(h+1)$.
\end{theorem}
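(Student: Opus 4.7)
The plan is to reduce the square invertible problem to the overdetermined framework already set up in Section \ref{overdetcase} and then invoke Theorem \ref{thm1} together with the hierarchical sparse factorization from Theorem \ref{thm41}. As noted in the paper, a square invertible system is simply the special case of an overdetermined system whose minimum-norm residual equals zero; this gives us a direct route without needing new machinery.

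First, I would set $x = Ay - b$ and rewrite the problem as the minimum-norm problem $\min \|x\|$ subject to $x + b \in \mathrm{Im}(A)$. Since $A$ is invertible, the unique feasible point with $x = 0$ corresponds to the (unique) true solution $y^\ast = A^{-1}b$, so the iterative minimum-norm search in $x$-space automatically drives $y_t$ toward $y^\ast$. I take the search directions to be $Q = A$ and start from an arbitrary $y_0$ (say $y_0 = 0$), with initial residual $x_0 = A y_0 - b$, which costs $\mathcal{O}(nk)$ to form once we have the factorization $A = CD$, by computing $C(D y_0) - b$ column-sparsely.

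Next I would invoke Theorem \ref{thm41} to obtain a $k$-sparse factorization $A = CD$ with $k = rd(d-1)(h+1) = \mathcal{O}(rd^2 h)$ in time $Cost(CD)$. The iterative updates are exactly of the form \eqref{eq0} with $q_i$ ranging over the columns of $Q = A$, and as explained in Section \ref{overdetcase}, each $x$-update corresponds to a dual update $y_{t+1} = y_t + \alpha^\ast_t e_i$ on the $y$-side, so keeping track of the scalars $\alpha^\ast_t$ reconstructs $y$ incrementally. Applying Theorem \ref{thm1} to this $k$-sparsely factorizable $Q$ with $m = n$ yields a cost of $\mathcal{O}(Nk + 2n k^2)$ for $N$ iterations; substituting $k = \mathcal{O}(rd^2 h)$ gives the iteration cost $\mathcal{O}(N\, rd^2 h + n\, r^2 d^4 h^2)$. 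Adding the one-off factorization cost $Cost(CD)$ completes the stated bound.

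There is essentially no hard step here, since every ingredient is already available: the overdetermined reduction, the existence of a sparse factorization for hierarchical matrices (Theorem \ref{thm41}), and the per-iteration complexity (Theorem \ref{thm1}). The only thing to double-check is bookkeeping: namely, that the construction of $x_0$, the recovery of $y_t$ from the sequence of step sizes, and the internal data structures used in the proof of Theorem \ref{thm1} all fit within the claimed $\mathcal{O}(n r^2 d^4 h^2)$ overhead and not merely within $\mathcal{O}(Nk)$. Since $C$ is $k$-row-sparse and each column of $D$ is $k$-sparse (by property \ref{proper2} of $k$-sparse factorizations), these preprocessing costs are each $\mathcal{O}(nk)$ or $\mathcal{O}(nk^2)$, which is absorbed into the stated bound.
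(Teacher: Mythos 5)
Your proposal is correct and follows essentially the same route as the paper's own proof: reduce the square invertible system to the overdetermined setting of Section \ref{overdetcase} with $Q=A$, apply Theorem \ref{thm1} with $m=n$ to get $\mathcal{O}(Nk+nk^2)$, and substitute $k=rd(d-1)(h+1)$ from Theorem \ref{thm41}. Your additional bookkeeping remarks about $x_0$ and the recovery of $y$ from the step sizes are consistent with what Theorem \ref{thm1} already guarantees.
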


\begin{proof}
    In section \ref{overdetcase}, page \pageref{overdetcase}, we explain how to solve an overdetermined system using iterations (\ref{eq0}). 
    Trivially, we can use the presented method for the square system $Ay = b$. 
    Following the notations of Theorem \ref{thm1}, here $Q = A$, $m = n$ and the running time is 
    $$\mathcal{O}(Nk +  nk^2) + Cost(CD).$$ 
    We moreover use Theorem \ref{thm41} which states 
    $k = rd(d-1)(h+1)$ to deduce that the running time is 
    $$\mathcal{O}(Nrd^2h + nr^2d^4h^2) + Cost(CD).$$
\end{proof}

In particular, if $A$ is an $\mathcal{H}_1(\mathcal{P})$-matrix (rank $r=1$) with a binary ($d=2$) dendrogram $\mathcal{P}$ of height $h=\mathcal{O}(\log n)$ (e.g., $A$ could be the reduced incidence matrix of a tree), then this running time becomes 
$$\mathcal{O}(N\log n + n\log^2n),$$ 
where we have used Proposition \ref{prop1} which states that a sparse factorization of $A$ is computed in time $\mathcal{O}(n\log^2n)$.

As far as we know, this is the best iterative method in terms of cost per iteration ($\log n$). 
Most standard method would exhibit a cost of $\mathcal{O}(n)$ per iteration, the cost of a matrix-vector product.
However, for solving squared hierarchical systems a direct method exists that solves such a problem in $\mathcal{O}(n \log^2 n)$ \cite{ambikasaran}.

\section{Solving Laplacian systems in nearly linear time}
\label{section5}

In the following we demonstrate how the approach outlined above can be used to solve Laplacian systems.

\subsection{Minimum norm solution for a system with reduced incidence matrix}
\begin{corollary}
\label{cor51}
Let $A$ be a reduced incidence matrix of a connected undirected graph on $n$ nodes and $m$ edges. 
Then, the minimal norm solution $x^*$ of a compatible system $Ax = b$ can be computed with relative accuracy $\epsilon = \|x_t - x^*\| / \| x^*\|$ in $\mathcal{O}(m\log^2 (n) \log \log (n)  \log (m \epsilon^{-1}))$ time. 
\end{corollary}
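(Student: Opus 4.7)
The idea is to apply Theorem~\ref{thm32} after choosing a suitable splitting $A=\begin{pmatrix}E & F\end{pmatrix}$ induced by a spanning tree. First, I would compute a \emph{low-stretch spanning tree} $T$ of the graph (e.g.\ using the Abraham--Neiman construction, which runs in $\mathcal O(m\log n\log\log n)$ time and yields total stretch $\tau_T=\mathcal O(m\log n\log\log n)$). Permuting the columns of $A$ so that the tree edges come first, we write $A=\begin{pmatrix}E & F\end{pmatrix}$, where $E$ is the reduced incidence matrix of $T$ and $F$ collects the columns corresponding to the non-tree edges. Since each column of $F$ has at most two nonzero entries, $F$ is $f$-column sparse with $f=2$.

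Next, I would invoke Proposition~\ref{prop47} to obtain the hierarchical structure of $E^{-1}$: after the node/edge reordering provided by the tree-vertex-separator recursion, $E^{-1}$ is an $\mathcal H_1(\mathcal P)$-matrix with binary dendrogram $\mathcal P$ of height $h=\mathcal O(\log n)$, whose $\mathcal O(\log n)$-sparse factorization can be assembled in $\mathrm{Cost}(CD)=\mathcal O(n\log^2 n)$ time. Plugging $r=1$, $d=2$, $h=\mathcal O(\log n)$, $f=2$ into Theorem~\ref{thm32}, the per-iteration cost is $\mathcal O(frd^2h)=\mathcal O(\log n)$ and the overhead is $\mathcal O(m\log^2 n)$, so $N$ iterations cost $\mathcal O(N\log n+m\log^2 n)$ beyond the preprocessing.

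The main technical step — and the place where the choice of a \emph{low-stretch} tree is essential — is bounding $\|E^{-1}F\|_{\mathrm{Frob}}^2$. I would verify that for each non-tree edge $e=(u,v)$ of weight $w_e$, the corresponding column of $E^{-1}F$ is supported on the unique $u$–$v$ path in $T$, and that its entry on a tree edge $e'$ of weight $w_{e'}$ has magnitude $\sqrt{w_e/w_{e'}}$. Consequently
\[
\|E^{-1}F\|_{\mathrm{Frob}}^2 \;=\; \sum_{e\notin T} w_e\!\!\sum_{e'\in\mathrm{path}_T(u,v)}\!\!\frac{1}{w_{e'}} \;=\; \tau_T \;=\; \mathcal O(m\log n\log\log n),
\]
which is exactly the total stretch of the non-tree edges with respect to $T$. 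This is the step I expect to be the main obstacle: it requires both a careful identification of $E^{-1}$ with the tree-path operator and the invocation of the low-stretch guarantee, which connects the purely linear-algebraic bound of Section~\ref{sec:underdetermined_convergence} to the graph-theoretic quantity.

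With this bound, $\kappa=m+\|E^{-1}F\|_{\mathrm{Frob}}^2=\mathcal O(m\log n\log\log n)$. By the discussion following~\eqref{eq:Ncondnum2}, achieving relative accuracy $\epsilon$ requires $\mathcal O(\kappa\log(\kappa\epsilon^{-1}))=\mathcal O(m\log n\log\log n\cdot\log(m\epsilon^{-1}))$ iterations, each at cost $\mathcal O(\log n)$. Adding the preprocessing cost $\mathcal O(m\log n\log\log n)+\mathcal O(n\log^2 n)$ and the $\mathcal O(m\log^2 n)$ overhead, the dominant term is
\[
\mathcal O\!\bigl(m\log^2 n\,\log\log n\,\log(m\epsilon^{-1})\bigr),
\]
which matches the stated complexity and completes the proof.
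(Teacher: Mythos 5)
Your proposal is correct and follows essentially the same route as the paper's proof: choose a low-stretch spanning tree via Abraham--Neiman to get the splitting $A=\begin{pmatrix}E & F\end{pmatrix}$, identify $\|E^{-1}F\|_{\mathrm{Frob}}^2$ with the total stretch in the inverse-weighted graph, invoke Proposition~\ref{prop47} for the $\mathcal{O}(\log n)$-sparse factorization of $E^{-1}$, and combine with Theorem~\ref{thm32} and the iteration count from Section~\ref{sec:underdetermined_convergence}. The only cosmetic difference is that you make the stretch formula and the $f=2$ column sparsity of $F$ fully explicit, which the paper leaves partly implicit.
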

\begin{proof} 
    Note that every edge in the graph corresponds to one column of $A$, and thus every spanning tree is associated with a submatrix $E$ which is invertible by construction~\cite{Strang1986}.
    Choosing an invertible (sub-)matrix $E$ such that $A =\begin{pmatrix}E & F\end{pmatrix}$ is therefore equivalent to selecting a spanning tree of $G$.
    We now claim that we can choose $E$, i.e., choose an appropriate spanning tree, such that $\|E^{-1}F\|_\textrm{Frob}^2=\mathcal{O}(m \log n \log \log n)$.

    For any choice of spanning tree, we define the root as the node whose row has been removed from the incidence matrix $A$ to obtain a reduced incidence matrix. 
    We choose the (arbitrary) orientation on the edges so as to go from root  to leaves. 
    We also order the nodes from root to leaves (topological order) and edges so that any edge has the same index as its destination.  
    Let us call the \textit{unweigted, directed} adjacency matrix of this spanning tree $T_E$.
    With the choices made above $T_E$ is upper triangular. 
    Then we can write $E=(I-T_E)\sqrt{W_{T_E}}$ where $W_{T_E}$ is the diagonal matrix weights on the edges. 

    Using a Neumann series expansion we can see that $E^{-1}=W^{-1/2}_{T_E} (I+T_E+T_E^2 + T_E^3 + \ldots + T_E^h)$ where $h$ is the height of the tree. 
    The columns of $E^{-1}$ encode the paths between root and leaves, with entries given by the (positive) inverse square root of the edge-weights.

    Since $F$ is a (reduced) incidence matrix, each column $i$ of $E^{-1}F$ is the (weighted) difference between two columns of $E^{-1}$.
    In fact, each column $i$ of $E^{-1}F$ describes the (signed) path in the tree between the extremities of edge $i$, on which each edge $e$ has weight $\sqrt{w_i/w_e}$.
    Therefore the squared Frobenius norm of $E^{-1}F$ is the so-called stretch of the tree in the graph with \emph{inverse} weights, i.e. weight $w^{-1}_e$ on each edge $e$ of the graph, as already noticed in \cite{Kelner2013}. Using the algorithm in Ref. \cite{Abraham2012} we can therefore find a spanning tree with reduced incidence matrix $E$ such that $\|E^{-1}F\|^2_\textrm{Frob}= \mathcal{O}(m \log n \log \log n)$, where $m$ is the number of edges in the graph.
    The incurred computational cost for is $\mathcal{O}(m \log n \log \log n)$~\cite{Abraham2012}.

    From Proposition \ref{prop47}, it follows that $E^{-1}$ is an $\mathcal{H}_1(\mathcal{P})$-matrix, with is a binary dendrogram $\mathcal P$ of height $h = \mathcal{O}(\log n)$, and parameters $r=1$, $d=2$.
     A sparse decomposition of $E^{-1}$ can thus be computed in time $\mathcal{O}(n\log^2n)$. 
     Using Theorem \ref{thm32}, we can thus compute the minimal norm solution $x^*$ of $Ax = b$ in nearly linear time. 
     
     More precisely, following Section \ref{sec:underdetermined_convergence} we define  $\kappa=\|E^{-1}F\|_\text{Frob}^2+m$,  which is $\mathcal O (m \log n \log \log n)$. We then find that $N_1=\mathcal{O}(\kappa)$ iterations, each of which costs $k=\mathcal O (\log n)$, suffice to gain one order of magnitude, and the overall cost to achieve a relative accuracy $\epsilon$ is $\mathcal O (\kappa \log(\kappa \epsilon^{-1}) + m k^2)$, which in this case reduces to $\mathcal O (m \log^2 n \log \log n \log (m\epsilon^{-1}))$.
     
\end{proof}

\subsection{Solving Laplacian systems}

The above corollary provides the critical step in solving a compatible Laplacian system $L\chi=c$, where $L$ is the Laplacian of the same graph, as we show now. 
For a given incidence matrix $B$ the Laplacian is defined as $L=BB^T$ , or equivalently as the node-by-node matrix with entries $L_{ij}=-w_{ij}$ for every edge $ij$ of weight $w_{ij}$, $L_{ij}=0$ if $i$ is not adjacent to $j$, and the weighted degree $L_{ii}=\sum_k w_{ik}$ on diagonal entries. 
Such a system $L\chi=c$ can be solved in two steps:
\begin{enumerate}
\item solve $Bx=c$ so that $x$ is in the image of $B^T$; 
\item  solve the compatible, overdetermined system $B^T\chi=x$.
\end{enumerate}
This strategy of splitting the problem of solving a Laplacian system into 2 parts is in line with the approach followed by Kelner et al.~\cite{Kelner2013}.
However, their algorithm relies on graph-theoretic notions and a specific data structure construction, rather than a matrix decomposition.

Note that the first step in the procedure above is equivalent to finding the minimum-norm solution of $Bx=c$.
Any solution of $Bx=c$ is of the form $x=B^T\chi+v$, for some $v$ such that $Bv=0$. 
This implies that $v$ is orthogonal to $B^T\chi$, and thus $B^T\chi+v$ has a norm larger than $B^T\chi$, with the minimum norm solution given by $v=0$. 
The goal is therefore to solve $Bx=c$ in the minimum norm sense. 
Since the columns of $B$ sum to zero, we can remove an arbitrary row without affecting the solution, i.e., we can `ground' the system.
Let us call $A$ the so-obtained reduced incidence matrix of the graph, and $b$ the vector obtained from $c$ by removing one entry.
Now we have to solve $Ax=b$, which can be done efficiently as discussed above.

The second step outlined above then requires finding the solution of a compatible overdetermined system.
This can be found by solving the square invertible triangular subsystem $E^Ty=x_E$  where $E$ is the reduced incidence matrix of the spanning tree used to solve $Ax = b$ (see the proof of Corollary \ref{cor51}) and $x_E$ is the corresponding part of vector $x$. 
Solving this triangular system takes $\mathcal{O}(n)$ time, from leaves to root. 

We remark that when solving a semi-definite positive system $L\chi=c$, the $L$-pseudo-norm $\|\chi\|_L^2=\chi^TL\chi$ is often used as the error norm. 
Note that all $\|\chi\|_L^2$ vanishes only if vector $\chi$ has identical entries. 
The relative accuracy of the solution $\chi$ is accordingly defined as $\epsilon = \|\chi-\chi^*\|_L/\|\chi^*\|_L$. 

Putting these pieces together, we obtain the following theorem: 
\begin{theorem}
\label{thmLaplacian}
Given a Laplacian matrix $L$ of a connected graph with $m$ edges and a zero-sum vector $c$, the (compatible) system $L\chi=c$ can be solved within time $\mathcal{O}(m \log^2 n \log \log n \log (m \epsilon^{-1}))$ with relative accuracy $\epsilon$, as measured in the $L$-pseudo-norm. 
\end{theorem}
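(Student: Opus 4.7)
The plan is to follow the two-step reduction outlined just before the theorem statement: factor the Laplacian as $L = BB^T$, first recover $y = B^T\chi^*$ by solving the underdetermined system $By=c$ in minimum-norm sense, and then recover $\chi^*$ from $y$ by a cheap triangular back-substitution on any spanning tree. The running time will then be dominated by the first step.

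More concretely, I would first note that since $c$ is a zero-sum vector, and the rows of $B$ sum to zero, removing one row yields a compatible reduced system $Ax = b$ whose minimum-norm solution $x^*$ coincides with that of $Bx=c$. Applying Corollary~\ref{cor51} directly gives an approximation $\tilde x$ satisfying
\begin{equation*}
\|\tilde x - x^*\|_2 \le \epsilon \|x^*\|_2
\end{equation*}
in time $\mathcal{O}(m \log^2 n \log\log n \log(m\epsilon^{-1}))$. Since $x^*$ is the minimum-norm solution of an underdetermined incidence-matrix system, one sees that $x^* \in \mathrm{Im}(B^T)$, hence $x^* = B^T\chi^*$ for some $\chi^*$ that is unique up to an additive constant (i.e.\ unique modulo the kernel of $L$).

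Second, I would recover $\tilde\chi$ from $\tilde x$ by solving only the $n-1$ tree-edge equations $E^T\tilde\chi = \tilde x_E$, where $E$ is the reduced incidence matrix of the low-stretch spanning tree already constructed inside the proof of Corollary~\ref{cor51}. Because $E^T$ is triangular after the ordering used in Proposition~\ref{prop47}, this back-substitution takes $\mathcal{O}(n)$ time (leaves to root), so the second step is absorbed into the complexity of the first.

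The main thing to verify, and the only non-routine step, is the error analysis converting the $\ell_2$-guarantee on $\tilde x$ into an $L$-pseudo-norm guarantee on $\tilde\chi$. The crucial identity is
\begin{equation*}
\|\tilde\chi-\chi^*\|_L^2 = (\tilde\chi-\chi^*)^T BB^T (\tilde\chi-\chi^*) = \|B^T\tilde\chi - B^T\chi^*\|_2^2,
\end{equation*}
and similarly $\|\chi^*\|_L^2 = \|B^T\chi^*\|_2^2 = \|x^*\|_2^2$. So once one shows $B^T\tilde\chi = \tilde x$, the relative $L$-pseudo-norm error of $\tilde\chi$ equals the relative $\ell_2$-error of $\tilde x$, and the claimed complexity follows. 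The slightly delicate point here is that we solved only the tree-edge equations $E^T\tilde\chi = \tilde x_E$, so a priori $B^T\tilde\chi$ may differ from $\tilde x$ on non-tree edges. I would handle this by observing that Corollary~\ref{cor51} produces $\tilde x$ that is a linear combination of columns of $Q$ added to an initial feasible point supported on tree edges, and all such $\tilde x$ lie in $\mathrm{Im}(B^T)$; equivalently, one can replace $\tilde x$ by its projection $B^T\tilde\chi$ at no additional cost, which only decreases the error. This projection step, and checking that it does not spoil the relative accuracy, is the only place where care is genuinely required; everything else is a direct assembly of Corollary~\ref{cor51}, the triangular solve on the tree, and the $L$-norm identity above.
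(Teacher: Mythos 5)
Your overall architecture (reduce to $Bx=c$, apply Corollary~\ref{cor51}, back-substitute on the tree, convert to the $L$-pseudo-norm via $\|v\|_L=\|B^Tv\|_2$) matches the paper's proof. However, the error analysis contains a genuine gap, and it sits exactly at the point you flag as "the only place where care is genuinely required." Your proposed fix does not work: the iterates produced by Corollary~\ref{cor51} do \emph{not} lie in $\mathrm{Im}(B^T)$. They are feasible points of $Ax=b$, i.e.\ they lie in the affine space $x^*+\mathrm{null}(B)$, and $\mathrm{null}(B)$ (the cycle space) is the \emph{orthogonal complement} of $\mathrm{Im}(B^T)$ (the cut space). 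Hence $\tilde x\in\mathrm{Im}(B^T)$ if and only if $\tilde x=x^*$ exactly; the error $\Delta x=\tilde x-x^*$ is entirely in the cycle space. Moreover, the map $\tilde x\mapsto B^T\tilde\chi$ with $E^T\tilde\chi=\tilde x_E$ is an \emph{oblique} projection onto $\mathrm{Im}(B^T)$ (it matches $\tilde x$ on tree edges only), not the orthogonal one, so it does not "only decrease the error"; the orthogonal projection onto $\mathrm{Im}(B^T)$ would require solving a Laplacian system and is circular.

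The correct accounting, which is what the paper does, is to bound the amplification incurred by the oblique projection:
\begin{equation*}
\|\tilde\chi-\chi^*\|_L^2=\|B^TE^{-T}\Delta x_E\|^2=\left\|\begin{pmatrix} I & E^{-1}F\end{pmatrix}^T\Delta x_E\right\|^2\leq \left\|\begin{pmatrix} I & E^{-1}F\end{pmatrix}\right\|_{\mathrm{Frob}}^2\,\|\Delta x\|^2=\mathcal{O}(m\log n\log\log n)\,\|\Delta x\|^2,
\end{equation*}
where the factor is precisely the stretch of the low-stretch spanning tree already used in Corollary~\ref{cor51}. Combined with $\|\chi^*\|_L^2=\|x^*\|^2$, the relative $L$-error of $\tilde\chi$ can exceed the relative $\ell_2$-error of $\tilde x$ by a factor $\mathcal{O}(\sqrt{m\log n\log\log n})$. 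One must therefore run the first stage to the tighter accuracy $\delta=\epsilon/\mathcal{O}(\sqrt{m\log n\log\log n})$; since $\log(m\delta^{-1})=\mathcal{O}(\log(m\epsilon^{-1}))$, this costs only a constant factor and the claimed complexity survives. Without this correction your argument does not deliver the stated accuracy guarantee.
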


 \begin{proof} 
 From Corollary \ref{cor51} we find an approximate solution $x^* +\Delta x$ to the
 problem $Bx=c$, with $\|\Delta x\|/\|x^*\| \leq \delta$, in time 
 $\mathcal{O}(m \log^2 n \log \log n  \log (n \delta^{-1}))$. 
 
 We then find the approximate solution $\chi^*+\Delta \chi$ as $E^{-T}(x_E^*+\Delta x_E)$, where $x_E$ denotes the  restriction of the $m$-dimensional vector $x$ to the $n$ entries corresponding to $E$. 
 The incurred error $\Delta \chi$ can be bounded, using $L=BB^T$ and $B=(E \,\,\, F)$: 
 \begin{flalign}
\|\Delta \chi\|^2_L&=\|E^{-T} \Delta x_E\|^2_L =\|(I \,\,\,\,\, E^{-1}F)^T \Delta x_E\|^2 \leq \mathcal{O}(m \log n \log \log n) \|\Delta x\|^2  
\end{flalign} 
 
Moreover the exact solution fulfills  $\|\chi^*\|^2_L=\|x^*\|^2$ by definition of $x=B^T\chi$. 
Thus, we see that the relative accuracy on $x$ in terms of $\|.\|_L$ is 
$$
\frac{\|\Delta \chi\|^2_L}{\|\chi^*\|^2_L}=\mathcal{O}(m\log n \log \log n )\frac{\|\Delta x\|^2}{\|x^*\|^2}
$$
Therefore we can choose $\delta^{-1}=\mathcal{O}(\sqrt{m\log n \log \log n} ) \epsilon^{-1}$, for any required accuracy level $\epsilon$.
The proof is concluded by Corollary \ref{cor51}.
\end{proof}

We remark that the computational complexity of our final algorithm could be reduced further, by using some of the computational techniques discussed in \cite{Koutis2010,Koutis2011,Kelner2013}, which are beyond the scope of this paper, however.
For instance, one could employ a preconditioning to change the norm of $\|E^-{1}F\|$ and thereby obtain a better initial estimate for $\hat x_0$.
Indeed using such a preconditioning recursively, Kelner et al. are able to obtain an algorithm with a total complexity of $\mathcal{O} (m \log^2 n \log \log n \log \epsilon^{-1})$~\cite{Kelner2013}.
Note, however, that Kelner et al.~\cite{Kelner2013} employ quite different means to establish this result. 
Instead of a matrix factorization, the core tool invoked is an efficient data-structure which enables fast updates.
Our $k$-sparse matrix factorization approach may thus be seen as an alternative perspective on the problem of solving Laplacian systems.

\section{Conclusion}
In this paper we have considered the problem of finding the minimum norm vector $x$ within an affine space, which arises naturally when solving an under- or overdetermined linear system. 
We have shown that this problem can be solved very efficiently in an iterative manner by choosing the matrix of search directions $Q= [q_1,\ldots,q_m]$ in an appropriate way.
Specifically, if there exists a $k$-sparse matrix factorization of $Q$, each iterative update of the form $x_{t+1} = x_t - \frac{x_t^Tq_i}{q_i^Tq_i}  q_i$ can be computed in $\mathcal{O}(k)$ time, enabling us to construct fast algorithm for solving linear systems.
The notion of a $k$-sparse matrix factorization is indeed central to these findings, as it ensures the existence of a computationally efficient update scheme despite the fact that $Q$ might be full, i.e., the search directions are not formed by sparse vectors.

We have shown that some important classes of matrices are $k$-sparsely factorizable, and in particular that in the case of hierarchical matrices $k$ does not depend on the size of the matrix, but merely on the depth of the hierarchy. 
From this, we have deduced an iterative method with fast iterations that approximates the minimal norm solution of underdetermined linear systems. 
In particular, this approach can be applied when the coefficient matrix is the incidence matrix of a connected graph.
This leads naturally to a method to solve Laplacian systems in nearly-linear time. 
In this context, our work provides a complementary algebraic perspective to the problem of solving Laplacian system, and connects combinatorial and graph-theoretic notions with the problem of finding a $k$-sparse matrix factorization.

An important direction for future work is to characterise the general class of matrices that can be sparsely factorized in more detail, and see how it can be extended beyond the matrices discussed within the present manuscript.
For instance, solvers based on tensor decompositions~\cite{Kressner2010,Ballani2013,Oseledets2012} have been presented in the literature, which assume that the linear system under study has an inherent Kronecker-product~\cite{Kressner2010,Ballani2013} or tensor-train~\cite{Oseledets2012} representation (or at least can be well approximated by such a structure).
It would be interesting to investigate in how far these matrix structures are also amenable to a $k$-sparse factorization.
    
Other avenues for future work include investigating possible parallelization of the here presented techniques, or combining them with other randomized update schemes ~\cite{Gower2015,Gower2015a} than the here considered randomized Kacmarz updates~\cite{Strohmer2009}.
For instance, it would be interesting to see in how far block updates (instead of single coordinate updates), could lead to more efficient iterative algorithms.

\bibliographystyle{siamplain}
\bibliography{references}

\newpage
\appendix

\section{Proof of Theorem \ref{thm1}}
\begin{theorem}[Theorem \ref{thm1}]
Let $Q \in \mathbb{R}^{m \times n}, C \in \mathbb{R}^{m \times p}$ and $D \in \mathbb{R}^{p \times n}$ be matrices such that $Q = CD$ is a $k$-sparse factorization of $Q$, and consider iterations of the form (\ref{eq0}) that start from an arbitrary vector $x_0 \in \mathbb{R}^m$. 
If every $q_i$ in \eqref{eq0} is a column of $Q$, then the computational complexity of running $N$ iterations of \eqref{eq0} is: 
$$\mathcal{O}(Nk + (m+n)k^2).$$
With the same complexity, we can compute a $y_N$ such that $x_N = x_0 + Qy_N$, where $x_N$ denotes the vector resulting from the $N$ first iterations. 
\end{theorem}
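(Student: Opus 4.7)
The plan is to avoid ever materializing $x_t \in \mathbb{R}^m$ explicitly and instead to maintain compact surrogates tied to the factorization $Q = CD$. Since every update adds a multiple of a column of $Q = CD$ to $x_t$, we have $x_t - x_0 \in \mathrm{Im}(C)$, so we can write $x_t = x_0 + C z_t$ for some $z_t \in \mathbb{R}^p$. Likewise, we track the dual variable $y_t \in \mathbb{R}^n$ with $z_t = D y_t$, so that $x_t = x_0 + Q y_t$, which is exactly what the theorem asks us to produce at the end. In these coordinates the iteration $x_{t+1} = x_t - \alpha_t q_{i_t}$ becomes
\[
y_{t+1} = y_t - \alpha_t e_{i_t}, \qquad z_{t+1} = z_t - \alpha_t d_{i_t},
\]
both of which are $O(k)$ since each $d_{i_t}$ is $k$-sparse (Property~\ref{proper2}).

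The main obstacle is to compute the step size $\alpha_t = x_t^T q_{i_t} / q_{i_t}^T q_{i_t}$ in $O(k)$, because $x_t$ and $q_{i_t}$ may both be dense. The $n$ denominators $q_i^T q_i = d_i^T C^T C d_i$ can be precomputed once, so the real task is the numerator
\[
x_t^T q_{i_t} \;=\; (C^T x_0)^T d_{i_t} \;+\; z_t^T (C^T C) d_{i_t}.
\]
The first term is a precomputed scalar per index $i$: compute $C^T x_0$ in $O(mk)$ time (exploiting that $C$ is $k$-row sparse by Property~\ref{proper2}), then form $n$ sparse inner products with the $d_i$ at total cost $O(nk)$. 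The second term is where the structure of the $k$-sparse factorization has to intervene.

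The key structural move is to split $C^T C = U + U^T$ with $U$ upper triangular and to rewrite
\[
z_t^T (U + U^T) d_{i_t} \;=\; v_t^T d_{i_t} \;+\; (U^T d_{i_t})^T z_t,
\]
where I introduce the auxiliary vector $v_t := U^T z_t$ and maintain it across iterations. Both summands reduce to $O(k)$ scalar products provided $U^T d_{i_t}$ is itself $k$-sparse. This sparsity is exactly what Definition~\ref{defi22} guarantees: a nonzero entry $(U^T d_i)_j$ forces $j \in FO(c_\ell)$ for some $\ell \in \mathrm{supp}(d_i)$, hence
\[
\mathrm{supp}(U^T d_i) \;\subseteq\; \bigcup_{\ell \in \mathrm{supp}(d_i)} FO(c_\ell),
\]
whose size is at most $k$ by definition. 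The update $v_{t+1} = v_t - \alpha_t (U^T d_{i_t})$ is therefore also $O(k)$, because $U^T d_{i_t}$ is $k$-sparse.

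To finalize, I would account for setup costs. For each $i$, forming and storing $U^T d_i$ takes $O(k^2)$ time (adding $\leq k$ columns of $U^T$, each with $\leq |FO(c_\ell)| \leq k$ nonzeros), totalling $O(nk^2)$; computing $C^T x_0$ is $O(mk)$; and the precomputed scalars $(C^T x_0)^T d_i$ together with the denominators $q_i^T q_i = 2 d_i^T (U^T d_i)$ add another $O(nk)$. Overall, precomputation is $O(mk + nk^2) = O((m+n)k^2)$, each of the $N$ iterations is $O(k)$, and $y_N$ is accumulated on the fly, which yields the announced bound $O(Nk + (m+n)k^2)$. The most delicate step to write out rigorously is the forward-overlap bound on $\mathrm{supp}(U^T d_i)$: without the upper-triangular splitting, the naïve bound on the symmetric support of $C^T C d_i$ would only deliver an $O(k^2)$ per-iteration cost and break the advertised complexity.
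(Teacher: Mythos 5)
Your proposal is correct and follows essentially the same route as the paper's proof: maintaining coefficient vectors in the $C$-coordinates together with an auxiliary vector obtained from the upper-triangular splitting $C^TC = U + U^T$, and using the forward-overlap condition of Definition~\ref{defi22} to show that $U^T d_i$ is $k$-sparse (your $z_t, v_t$ are the paper's $h_t, g_t$ up to the cosmetic difference that you keep $x_0$ outside the representation and precompute the scalars $(C^Tx_0)^Td_i$, whereas the paper augments $C$ with the identity so that $x_0 = Ch_0$). The only unaccounted item is the $\mathcal{O}(mk^2)$ cost of forming $C^TC$ (hence $U$) itself, which the paper tracks explicitly but which is absorbed by the $\mathcal{O}((m+n)k^2)$ overhead in any case.
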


\begin{proof} 

    Let us first comment on the general strategy for computing fast iterations.
    Given $x_t \in \mathbb{R}^m$ and a column $q_j = Cd_j$ of $Q$, recall that the next iteration we aim to compute is of the form 
    \begin{equation}\label{eq:iteration}
        x_{t+1} = x_t - \frac{x_t^Tq_j}{q_j^Tq_j}  q_j.
    \end{equation}

    In order to get a running time for each iteration not depending on the system size $m$, we make use of two generating sets of $\mathbb{R}^m$.
    The sets are given by the columns of $C$, as well as the columns of $CU^{-T}$, where $U$ is the $p \times p$ upper triangular matrix such that $C^TC = U^T + U$.
    Each column of $Q$ has a decomposition in terms of these generating sets with a sparsity governed by $k$; indeed a column $q_j$ is expressed as $q_j=C d_j = CU^{-T}U^Td_j$, where $d_j$, a column of $D$, is $k$-sparse and $e_j := U^Td_j$ is $k$-sparse by definition of the $k$-sparse factorisation.
    Using these sets we can thus express $x_t$, with the coefficient vectors $h_t, g_t$, defined via the relationships $x_t=Ch_t$ and $x_t=CU^{-T}g_t$. 
    Note that such a vector $g_t$ is given by $g_t=U^Th_t$.
    Now at each iteration, we only use the vectors $h_i, g_i, d_j$ and $e_j$, and do not need to store the full vector $x_t$. 
    In particular the inner-product can be computed as:
    \begin{align*}
    x_t^Tq_j &= h_t^T(C^TC)d_j= h_t^T(U + U^T)d_j\\
    &= (U^Th_t)^Td_j + h_t^T(U^Td_j)= g_t^Td_j + h_t^Te_j.
    \end{align*}
    This can be done in $\mathcal{O}(k)$ time, as we will show in the following.
    
    In order to establish this key result about the complexity of the inner product, which leads directly to an efficient algorithm for performing our iterative updates, we will proof the following facts.
    \begin{enumerate}
        \item[\textbf{Fact 1}] We can compute the matrix $U$ in $\mathcal{O}(mk^2)$ (which is also the cost of computing $C^TC$)
        \item[\textbf{Fact 2}] We can compute an $m$-sparse vector $h_0 \in \mathbb{R}^p$ such that $x_0 = Ch_0$ in time $\mathcal{O}(m)$
        \item[\textbf{Fact 3}] We can compute $g_0 := U^Th_0$ in time $\mathcal{O}(mk)$.
        \item[\textbf{Fact 4}] The matrix $U^TD$ can be computed in time $\mathcal{O}(nk^2)$ 
        \item[\textbf{Fact 5}] All the scalar products $q_i^Tq_i$, where $q_i$ is a column of $Q$ are computable in time $\mathcal{O}(nk)$
    \end{enumerate}

    \textit{\textbf{Proof of Fact 1}}
    The cost of computing $C^TC$ can be estimated by the number of scalar additions and multiplications involved. 
    In fact the number of additions is the same as the number of multiplications, so we need only track the number of scalar multiplications. 
    From the elementary properties of the $k$-sparse factorization, it follows that there are at most $k$ entries per row. 
    In the  course of computing the entries of $C^TC$ all the scalar products between the $p$ columns of $C$ will be computed.
    Thus we find that every entry of the first row of $C$ will be multiplied with every of the $k$ (or less) entries of first row, which gives $\mathcal{O}(k^2/2)$ scalar multiplications associated to the entries of the first row. 
    Since every row can be treated similarly, the cost of computing $C^TC$ is at most $\mathcal{O}(mk^2)$.

    \textit{\textbf{Proof of Fact 2}}
    We can assume without loss of generality that the columns of $C$ contain the canonical basis of $\mathbb{R}^m$. 
    To see this, one can set ${\tilde{C} := \begin{pmatrix} I_m & C\end{pmatrix} \in \mathbb{R}^{m \times (p+m)}}$ and ${\tilde{D} := \begin{pmatrix} 0 & D^T \end{pmatrix}^T \in \mathbb{R}^{(p+m) \times n}}$. 
    It then follows that for each column $\tilde{c}_i$ of $\tilde{C}$, $|FO(\tilde{c}_i)| \leq k+1$, that $\tilde{D}$ is $(k+1)$-column sparse and that for each column $\tilde{d}_j$, $\left|\cup_{i \in supp(\tilde{d}_j)} FO(\tilde{c}_i)\right| \leq k+1$. 
    Consequently, even though $\tilde{D}$ has some zero rows, the factorization $\tilde{C}\tilde{D}$ has all the properties of a $(k+1)$-sparse factorization and we say that $\tilde{C}\tilde{D}$ is $(k+1)$-sparse. 
    As a consequence, the running time does asymptotically not depend on the choice of the decomposition  $CD$ or $\tilde{C}\tilde{D}$.
    Hence, we can assume without loss of generality that a vector $h_0 \in \mathbb{R}^p$, such that $x_0 = Ch_0$, can be computed in $\mathcal{O}(m)$ time.
    
    \textit{\textbf{Proof of Fact 3}}
    Denote by $U$ the $p \times p$ upper triangular matrix such that $C^TC = U^T + U$. 
    Notice that the $i^{th}$ row of $U$ is $|FO(c_i)|$-sparse. 
    Since $|FO(c_i)| \leq k$, this implies that the matrix $U$ is $k$-row sparse. 
    Moreover, as each column of $C$ is assumed to be nonzero, we can deduce that $U$ is invertible.
    Hence, given $h_0$, since $U^T$ is $k$-column sparse, we compute the vector $g_0 := U^Th_0$ in time $\mathcal{O}(mk)$.

    \textit{\textbf{Proof of Fact 4}}
    Let $d_j$ be a column of $D$, which is $k$-sparse. Then, since $Q = CD$ is a $k$-sparse factorization, the vector $e_j := U^Td_j$ is $k$-sparse and is computed in time $\mathcal{O}(k^2)$.
    Consequently, we can compute the matrix product $U^TD$, i.e., all vectors $e_i$ in $\mathcal{O}(nk^2)$.

    \textit{\textbf{Proof of Fact 5}} We compute any product $q_i^Tq_i$ as:
    \begin{align*}
        q_i^Tq_i & = d_i^T(C^TC)d_i= d_i^T(U + U^T)d_i = (U^Td_i)^Td_i + d_i^T(U^Td_i) = e_i^Td_i + d_i^Te_i.
    \end{align*}
    Since $e_i$ and $d_i$ are $k$-sparse, it takes $\mathcal{O}(k)$ time to compute $q_i^Tq_i$, and thus $\mathcal{O}(nk)$ to compute all the products.

    \section{Fast iterative algorithms}
    Following the analogous reasoning as in the proof of fact 5, we see that
    \begin{align*}
    x_i^Tq_j = g_i^Td_j + h_i^Te_j
    \end{align*}
    is also computable in $\mathcal{O}(k)$ time.
    Hence, we can compute a first iteration of \eqref{eq:iteration} efficiently.
    
    In order to make this computational gain available at every iteration, we have to find a way to update $h_t$ and $g_t$ in a fast manner, too.
    Given $h_t, g_t \in \mathbb{R}^p$ such that $x_t = Ch_t$ and $g_t = U^Th_t$ and given $e_j = U^Td_j$, the vectors 
    \begin{equation*}
        h_{t+1} := h_{t} - \frac{x_t^Tq_j}{q_j^Tq_j}  d_j 
    \end{equation*}
    \begin{equation*}
        g_{t+1} = g_t - \frac{x_t^Tq_j}{q_j^Tq_j}  e_j 
    \end{equation*}
    are such that $x_{t+1} = Ch_{t+1}$ and $g_{t+1} = U^Th_{t+1}$. 
    Moreover, from fact 2 and 3, and the sparsity of $d_j$, it follows that the vectors $h_{t+1}$ and $g_{t+1}$ are computed in time $\mathcal{O}(k)$.

    Consequently, at each iteration, we only need the vectors $h_t, g_t, d_j$ and $e_j$ in order to compute $h_{t+1}$ and $g_{t+1}$.
    Note that both $h_{t+1}$ and $g_{t+1}$ are required to compute the scalar product $x_{t+1}^Tq_{j}$ (needed in the next iteration)
    in time $\mathcal{O}(k)$. 
    Finally, the approximate solution after $N$ steps, $x_N$, is computed from the relation $x_N = Ch_N$.
    This can be done in $\mathcal{O}(mk)$ time due to the sparsity of $C$.

    Combining these results, it follows that $N$ iterative updates can be performed in time 
    $$\mathcal{O}(Nk + pk + (m+n)k^2).$$ 

    Finally, the computation of $y_N$ such that $x_N = x_0 + Qy_N$ can be performed while computing $x_N$ with the above described method without additional costs. 
    Indeed, start with $y_0 = 0$. If the $(t+1)^{th}$ iteration is 
    $$x_{t+1} = x_t - \frac{x_t^Tq_j}{q_j^Tq_j}q_j,$$ 
    then $y_{t+1}$ corresponds to updating the $j^{th}$ entry of $y_t$ by adding $-\frac{x_t^Tq_j}{q_j^Tq_j}$.
    As the required scalar products are computed for $x_{t+1}$, no additional cost is incurred.
\end{proof}

\subsection{Relationships to randomized Kaczmarz and randomized coordinate descent}
In the following we discuss how the iterative updates we discuss in Section \ref{sec:prelim} can be interpreted from the lens of (randomized) Kacmarz and (randomized) coordinate descent methods.

\subsubsection{The underdetermined case}
We consider finding the minimum norm solution for a consistent linear system $Ax = B$ where $A$ is an $n \times m$ matrix with $m > n$.
As discussed in Section \ref{sec:prelim}, given any initial solution $x_0$, this can be achieved by iteratively updating $x$, by projecting it onto the hyperplane orthogonal to the vectors $q_i$:
\begin{equation} \label{eq:update}
    x_{t+1} = \left[I- \dfrac{q_iq_i^T}{q_i^Tq_i}\right ] x_t = x_t - \dfrac{x_t^Tq_i}{\|q_i\|^2} q_i,
\end{equation}
where the update directions $q_i$ lie within the null-space of $A$.
Stated differently, the matrix $Q = [q_1,q_2,\ldots]$ fulfils $AQ=0$.

Now we can relate the above to the Kacmarz scheme as follows:
Let us denote the row vectors of $A$ by $a_i^T$ ($i\in 1,\ldots n$).
One update step according to the Kacmarz scheme is defined as:
\begin{equation} \label{eq:KM}
    x_{t+1} = x_t + \dfrac{b^i - a_i^Tx_t}{a_i^Ta_i^{}}a_i, 
\end{equation}
where $b^i$ is the $i$-th component of the right hand side.

To see that finding this minimal norm solution via the update $\eqref{eq:update}$ can indeed be interpreted as Kacmarz update scheme, let us define the augmented $m\times m$ linear system:
\begin{equation}\label{eq:augmented_sys}
A'x = \begin{pmatrix} A \\ Q^T\end{pmatrix}x = \begin{pmatrix} b\\ 0 \end{pmatrix}.
\end{equation}
Note the (unique) solution to this system is indeed the minimum norm solution of $Ax=b$.

Let us now consider iteratively solving~\eqref{eq:augmented_sys} according to the Kacmarz scheme.
Since we assumed that we start with an initial condition $x_0$ that fulfills $Ax_0 =b$, the first $m$ equations are automatically fulfilled.
Given that the right hand side has to be zero for the $m-n$ equations for the solution to be of minimum norm, we can easily see that all the updates are indeed of the desired form.

\paragraph{Finding a feasible solution $x_0$}
Let us briefly discuss the scenario that we cannot obtain a feasible solution $x_0$ in a simple manner, but the matrix $(A')^T$ in~\eqref{eq:augmented_sys} is sparsely factorizable.
As using our $k$-sparse factorization, all inner-products are cheap to compute, we can also compute iterations of the form~\eqref{eq:KM} efficiently.
In particular, for a compatible square system of the form $Ax = b$, where $A^T$ is sparsely factorizable (say $Q = I A^T $ is $k$-column sparse), we can employ our $k$-sparse matrix factorization to compute any iteration of the form~\eqref{eq:KM} in $\mathcal O(k)$ time.

\subsubsection{The overdetermined case}
In this case we have a system of the form $Ay  = b$ where a is an $n \times m$ matrix with $m < n$.
Let us define $x = Ay-b$ as discussed in Section \ref{sec:prelim}.
From the analytical solution to the normal equations $A^TAy = A^Tb$ we know that we must have $A^Tx =0$.
Whence, if we choose $Q=A$ in our update rule $\eqref{eq:update}$, this is exactly equivalent to an update of the form $\eqref{eq:KM}$, and can be solved efficiently using our $k$-sparse matrix factorization.

As discussed by Gower and Richtarik \cite{Gower2015, Gower2015a} the dual update in $y$ simply corresponds to coordinate descent:
$$ y_{t+1} = y_t - \dfrac{(Qy-b)^Tq_i}{\|q_i\|^2}e_i,$$
where $e_i$ is the $i$th unit vector.
Indeed by keeping track of the step sizes $\alpha_t^* = (q_i^Tx_t / q_i^Tq_i) q_i$ we effectively construct $y^*$ such that $Qy^* + x_0 = x^*$ in \eqref{eq:aff}.

\section{Semiseparable and hierarchical matrices}

\begin{lemma}
\label{lem43}
The number of columns in $C$ in the recursive construction in \eqref{Cmatrix} in Section \ref{sec:31} is given by $p \leq rd^2n$.
\end{lemma}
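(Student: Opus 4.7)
The plan is to establish the bound by directly counting the columns of $C$ that are produced at each node of the dendrogram $\mathcal{P}$ when the recursive construction in~\eqref{Cmatrix} is unfolded. Every column of $C$ originates either from the factorization of some off-diagonal elementary block $E_{I_i \times I_j}$ with $i\neq j$ (contributed at an internal node of $\mathcal{P}$), or from the trivial scalar base case $E=CD=E\cdot 1$ applied to a $1\times 1$ diagonal block (contributed at a leaf). Accounting for both contributions separately and summing over the tree will give the bound.

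First I would handle the leaves. Each leaf of $\mathcal{P}$ corresponds to a $1\times 1$ block on the diagonal, which by the stated base case contributes exactly one column to $C$. Since $\mathcal{P}$ has exactly $n$ leaves, the total leaf contribution is $n$ columns.

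Next I would handle the internal nodes. Consider an internal node $v$ of degree $t_v\leq d$. The decomposition~\eqref{Cmatrix} at this level introduces $t_v$ diagonal sub-block factorizations $C_{ii}$ (whose columns are accounted for further down in the recursion) together with $t_v(t_v-1)$ off-diagonal factorizations $E_{I_i\times I_j}=C_{ij}D_{ij}$, $i\neq j$. Because each elementary block has rank at most $r$, each $C_{ij}$ contributes at most $r$ columns to $C$. Hence the columns contributed \emph{at} node $v$ number at most
$$r\,t_v(t_v-1)\;\leq\;rd(d-1).$$

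Finally I would combine the two counts. By the definition of a dendrogram as a sequence of strictly refining partitions, every internal node of $\mathcal{P}$ has at least two children, so a dendrogram with $n$ leaves has at most $n-1$ internal nodes. Summing over all nodes yields
$$p\;\leq\;n\;+\;(n-1)\cdot rd(d-1)\;\leq\;n\bigl(1+rd(d-1)\bigr)\;\leq\;rd^{2}n,$$
where the last inequality uses $rd\geq 1$, so $1+rd(d-1)\leq rd+rd(d-1)=rd^2$. The only subtle point in the argument is the observation that a dendrogram cannot contain a degenerate internal node of degree one (which would otherwise spoil the bound $I\leq n-1$ on the number of internal nodes); this is immediate from the definition requiring each $P_i$ to be strictly finer than $P_{i+1}$.
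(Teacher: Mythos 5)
Your proof is correct and reaches the stated bound, but it takes a genuinely different route from the paper's. The paper argues by induction on $n$, establishing the sharper invariant $p \leq rd(d-1)\left(\frac{d}{d-1}n - \frac{1}{d-1}\right)$ and only then relaxing it to $rd^2n$; you instead unroll the recursion completely and charge the columns of $C$ to the nodes of the dendrogram ($n$ columns to the leaves via the scalar base case, at most $r\,t_v(t_v-1)\leq rd(d-1)$ to each internal node via its off-diagonal blocks), closing with the structural fact that a tree with $n$ leaves has at most $n-1$ branching internal nodes. The two arguments perform the same count, but your version avoids having to guess the inductive invariant and makes it transparent where every column originates, at the price of invoking a global fact about the tree. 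One caveat: your justification for excluding degree-one internal nodes is not quite right. The paper only requires the partitions $P_h,\dots,P_0$ to be increasingly fine, and even strict refinement at each level does not force every block to be split: for instance $P_1=\{\{1,2\},\{3\}\}$ strictly refines $P_2=\{\{1,2,3\}\}$, yet the node $\{3\}$ at height $1$ has a single child, so the number of internal nodes can exceed $n-1$. This does not break your bound, because a node with $t_v=1$ introduces no off-diagonal blocks and hence contributes $r\cdot 1\cdot 0=0$ columns; the repair is simply to restrict the count to internal nodes with at least two children, of which there are indeed at most $n-1$. (The paper's induction implicitly assumes $t\geq 2$ at every split as well, so this is a gap shared with, rather than created relative to, the original.)
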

\begin{proof} By induction on $n$, we prove that $p \leq rd(d-1)\left(\frac{d}{d-1}n - \frac{1}{d-1}\right)$.
\begin{enumerate}
\item If $n = 2$, then $d = 2$ and $p \leq 4  \leq  rd(d-1)\left(\frac{d}{d-1}n - \frac{1}{d-1}\right) \leq rd^2n$. 
\item If $n > 2$, then $E$ is of the form (\ref{Ematrix}). 
    Let us denote the size of a diagonal block $E_{I_i \times I_i}$ by $n_i$, so we have $n = \sum_{i = 1}^d n_i$.
    Now, from the  construction of $C$ we know that $p \leq r(d-1)d + \sum_{i = 1}^d p_i$, where $p_i$ is the maximum number of columns in the matrix $C_i$ of $E_{I_i \times I_i}$ $(1 \leq i \leq d)$. 
    Consequently, by induction we have 
\begin{align*}
p &\leq r(d-1)d + r(d-1)d \sum_{i = 1}^d \left(\frac{d}{d-1}n_i - \frac{1}{d-1}\right)\\  
    &= r(d-1)d\left(\frac{d}{d-1}n - \frac{1}{d-1}\right) \leq rd^2n.
\end{align*}
\end{enumerate}
\end{proof}

\begin{theorem}[Theorem \ref{thm:semsep}]
An $n \times n$ matrix that is $(p,q)$-semiseparable is an $\mathcal{H}_r(\mathcal{P})$-matrix where $r = \max\{p,q\}$ and $\mathcal{P}$ is a binary dendrogram.
\end{theorem}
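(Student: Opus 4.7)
The plan is to exhibit a suitable binary dendrogram $\mathcal{P}$ on the index set $\{1,\ldots,n\}$ and then verify that every off-diagonal elementary block of the induced $\mathcal{P}$-partitioning has rank at most $r=\max\{p,q\}$. The natural choice for $\mathcal{P}$ is the balanced recursive split into contiguous halves: partition $\{1,\ldots,n\}$ as $\{1,\ldots,\lceil n/2\rceil\}$ and $\{\lceil n/2\rceil+1,\ldots,n\}$, then recurse on each half until singletons are reached. By construction $\mathcal{P}$ is a binary dendrogram, and every node of $\mathcal{P}$ is a set of consecutive integers; any two sibling nodes therefore have the form $I_1=\{a,\ldots,c\}$ and $I_2=\{c+1,\ldots,b\}$ for some cut point $c$.

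Next I would check the two off-diagonal elementary blocks associated to such a sibling pair, namely $E_{I_1\times I_2}$ (the upper-right block with respect to the cut $c$) and $E_{I_2\times I_1}$ (the lower-left block). Setting $i=c+1$ in the first semiseparability condition gives $\mathrm{rank}(E(1:c+q,\,c+1:n))\leq q$, and since $I_1\subseteq\{1,\ldots,c+q\}$ and $I_2\subseteq\{c+1,\ldots,n\}$, the block $E_{I_1\times I_2}$ is a submatrix of $E(1:c+q,\,c+1:n)$ and therefore has rank at most $q\leq r$. Applying the second semiseparability condition with the same $i=c+1$ gives $\mathrm{rank}(E(c+1:n,\,1:c+p))\leq p$, which contains $E_{I_2\times I_1}$ as a submatrix and hence bounds its rank by $p\leq r$.

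Because every off-diagonal elementary block of the $\mathcal{P}$-partitioning arises from a sibling pair of contiguous sets in exactly this way (at every recursion depth, the recursion on a diagonal block $E_{I\times I}$ with $I$ consecutive only needs the semiseparability conditions applied at an index $i$ lying inside $I$, which are inherited from the global ones), we conclude that $E$ is an $\mathcal{H}_r(\mathcal{P})$-matrix with $r=\max\{p,q\}$.

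The only delicate point—and the part I would be most careful about—is that the $\mathcal{P}$-partitioning presumes row indices and column indices are cut at the same position $c$ at every level of the hierarchy, so that sibling off-diagonal blocks are rectangular submatrices $E(\cdot_{\mathrm{rows}}, \cdot_{\mathrm{cols}})$ sitting strictly above or below the diagonal relative to a single cut point. This is precisely what lets us invoke the two one-sided semiseparability bounds at $i=c+1$; an arbitrary binary hierarchy that mixed non-contiguous index sets would not permit this reduction, which is why the balanced contiguous split is the right dendrogram to use.
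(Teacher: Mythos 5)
Your proof is correct and takes essentially the same route as the paper's: a balanced recursive split into contiguous halves, the two one-sided semiseparability rank conditions to bound each off-diagonal sibling block by $q$ and $p$ respectively, and recursion on the diagonal blocks (which inherit $(p,q)$-semiseparability). The only difference is a boundary technicality in the choice of index: $i=c+1$ is infeasible in Definition \ref{defi600} when $c+q>n$ (respectively $c+p>n$), which is why the paper instead selects $i_1$ from the range $\lfloor n/2\rfloor-q+1\leq i_1\leq\min\{\lfloor n/2\rfloor,\,n-q+1\}$; taking $i=\min\{c+1,\,n-q+1\}$ in your argument repairs this without changing anything else.
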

\begin{proof} Following the definition, we have $1 \leq p,q \leq n$ and we assume without loss of generality that $n \geq 2$.
Now, let $E$ be an $n \times n$ matrix which is $(p,q)$-semiseparable and let $\{I_1, I_2\}$ be a partition of $\mathcal I =\{1, ..., n\}$ with $I_1 = \{1, ..., \lfloor\frac{n}{2}\rfloor\}$ and $I_2 = \mathcal I \backslash I_1$.
Consider an integer $i_1 \in \mathcal I$ such that $ \lfloor\frac{n}{2}\rfloor - q +1 \leq i_1 \leq \min\{\lfloor \frac{n}{2}\rfloor, n-q +1\}$. 
Then, the submatrix ${E(1: i_1+q-1, i_1:n)}$ is of rank $\ell_1 \leq q$ and contains $E_{I_1 \times I_2}$.

Similarly, if $i_2 \in \mathcal I$ such that $ \lfloor\frac{n}{2}\rfloor - p +1 \leq i_2 \leq \min\{\lfloor \frac{n}{2}\rfloor, n-p +1\}$, then the submatrix ${E(i_2: n, 1:i_2+p-1)}$ is of rank $\ell_2\leq p$ and contains $E_{I_2 \times I_1}$.
Therefore, we have shown that the off-diagonal blocks $E_{I_1 \times I_2}$ and $E_{I_2 \times I_1}$ are of a rank smaller of equal to $r$.

From the definition of semiseparable matrix, it follows that the diagonal blocks $E_{I_1 \times I_1}$ and $E_{I_2 \times I_2}$ of $E$ are also $(p,q)$-semiseparable matrices.
Repeating the previous argument recursively on $E_{I_1 \times I_1}$ and $E_{I_2 \times I_2}$ shows that $E$ is an $\mathcal{H}_r(\mathcal{P})$-matrix with $\mathcal{P}$ being a binary dendrogram, i.e $d = 2$.
\end{proof}

\section{Convergence rate and required number of iterations for randomly sampled search directions}

The proof is due to Strohmer and Vershynin \cite{Strohmer2009} and has originally been given in the context of a randomized Kaczmarz's method for solving linear systems.
The version we give here is adapted to the context of this paper. 

We want to establish the speed of convergence of iterations \eqref{eq0}, when each column $q_i$ of the matrix $Q$ is chosen with probability proportional to $\|q_i\|^2$. 
In order to do so, for any $x$  we first consider the auxiliary quantity $$\sum_{i} \langle x,q_i \rangle^2 = x^TQQ^Tx \geq \sigma^2_{\min}(Q) \|x\|^2.$$
Here $\langle  x,q_i \rangle$ denotes the usual scalar product $x^Tq_i$. 
If each direction $q_i$ is selected with probability  $p_i=\|q_i\|^2/\sum_j \|q_j\|^2=\langle q_i,q_i \rangle/\|Q\|^2_\textrm{Frob}$, we can rewrite this inequality as $$\sum_{i} p_i \langle x,q_i \rangle^2/\langle q_i,q_i \rangle \geq \frac{\sigma^2_{\min}(Q)}{\|Q\|^2_\textrm{Frob}} \|x\|^2.$$

Now, we know that $x^*$, the minimum norm point in the affine space $x_0+ \textrm{span}\{q_i\}$, must be orthogonal to all directions $q_i$ in the space.
It thus follows that $\langle x, q_i \rangle=\langle x-x^*, q_i \rangle$.
Therefore, we can write:
\begin{equation} \label{eqStrohmer}
\sum_{i} p_i \langle x,q_i \rangle^2/\langle q_i,q_i \rangle \geq \frac{\sigma^2_{\min}(Q)}{\|Q\|^2_\textrm{Frob}} \|x-x^*\|^2 .
\end{equation}
  
Furthermore, we have
$$\|x_{t}-x^*\|^2=\|(x_{t+1}-x^*)-(x_{t+1}-x_{t})\|^2=\|x_{t+1}-x^*\|^2+\|x_{t+1}-x_{t}\|^2.$$
The second equality is due to orthogonality  
$$\langle x_{t+1}-x^*, x_{t+1}-x_t \rangle=0=\langle x_{t+1}-x^*, \text {const}\cdot q_i \rangle . $$
This can be checked from the two following observations. 
First, $x^*$ is orthogonal to all directions $q_i$ in the affine space, as it is the point with the minimal norm in our affine space.
Second, $x_{t+1}$ is computed as the minimum norm point on the line $x_t+\alpha_t q_i$, and is therefore also orthogonal to the current search direction $q_i$. 
Thus the error $x_{t+1}-x^*$ is also orthogonal to search direction $q_i$.

Finally we combine the results and observe that the expected value of the error norm

\begin{flalign}
\sum_i p_i& \|x_{t+1}-x^*\|^2= \sum_i p_i \|x_{t}-x^*\|^2 - \sum_i p_i \|x_{t+1}-x_{t}\|^2\\
=& \|x_{t}-x^*\|^2 - \sum_i p_i \frac{\langle x_t, q_i \rangle^2}{\langle q_i,q_i \rangle^2}\|q_i\|^2
\leq  \left (1 - \frac{\sigma^2_{\min}(Q)}{\|Q\|^2_\textrm{Frob}}\right ) \|x_{t}-x^*\|^2, 
\end{flalign}
which is  the desired result.

\end{document}